\newcommand{\captionstringtheorem}{Theorem}
\newcommand{\captionstringlemma}{Lemma}
\newcommand{\captionstringcorollary}{Corollary}
\newcommand{\captionstringlemmadef}{Lemma and definition}
\newcommand{\captionstringtheoremdef}{Theorem and definition}
\newcommand{\captionstringdefinition}{Definition}
\newcommand{\captionstringproposition}{Proposition}
\newcommand{\captionstringexample}{Example}
\newcommand{\captionstringconjecture}{Conjecture}
\newcommand{\captionstringconvention}{Convention}
\newcommand{\IZ}{\mathbb{Z}}
\newcommand{\isomorphic}{\cong}
\DeclareMathOperator{\id}{id}
\DeclareMathOperator{\Hom}{Hom}
\DeclareMathOperator{\End}{End}
\DeclareMathOperator{\Ind}{Ind}
\DeclareMathOperator{\Res}{Res}
\DeclareMathOperator{\ord}{ord}
\newcounter{theoremnumber}
\numberwithin{theoremnumber}{section}
\newtheoremstyle{dotless} 
			{\bigskipamount}    
			{\bigskipamount}    
			{\nopagebreak}      
			{}                  
			{\bfseries}         
			{:}                 
			{\newline}          
			{}                  
\newtheoremstyle{dotless2} 
			{\bigskipamount}    
			{\bigskipamount}    
			{}                  
			{}                  
			{\bfseries}         
			{:}                 
			{0.5em}             
			{}                  
\theoremstyle{dotless2}
\newtheorem{remark}[theoremnumber]{Remark}
\theoremstyle{dotless}
\newtheorem{theorem}[theoremnumber]{\captionstringtheorem}
\newtheorem{lemma}[theoremnumber]{\captionstringlemma}
\newtheorem{lemmadef}[theoremnumber]{\captionstringlemmadef}
\newtheorem{corollary}[theoremnumber]{\captionstringcorollary}
\newtheorem{proposition}[theoremnumber]{\captionstringproposition}
\newtheorem{definition}[theoremnumber]{\captionstringdefinition}
\newtheorem{example}[theoremnumber]{\captionstringexample}
\newtheorem*{convention}{\captionstringconvention}
\tikzset
{
	desc/.style=
	{
		fill=white,inner sep=2pt,font=\scriptsize
	},
	Vertex/.style =
	{
		inner sep = 1pt,
		outer sep = 2pt,
		minimum size=15pt,
		circle,
		draw=black!70,
		thick,
		font=\scriptsize
	},
	EdgeT/.style =
	{
		ultra thick,
		draw=black,
		font=\scriptsize
	},
	EdgeI/.style =
	{
		->,
		draw=black!70,
		font=\scriptsize
	}
}
\theoremstyle{dotless}
\newtheorem{algorithm}[theoremnumber]{Algorithm}
\author{Johannes Hahn}
\title{On canonical bases and induction of $W$-graphs}
\begin{document}
\maketitle

\begin{abstract}
A canonical basis in the sense of Lusztig is a basis of a free module over a ring of Laurent polynomials that is invariant under a certain semilinear involution and is obtained from a fixed \enquote{standard basis} through a triangular base change matrix with polynomial entries whose constant terms equal the identity matrix.

Among the better known examples of canonical bases are the Kazhdan-Lusztig basis of Iwahori-Hecke algebras (see \cite{KL}), Lusztig's canonical basis of quantum groups (see \cite{lusztig1990canonical}) and the Howlett-Yin basis of induced $W$-graph modules (see \cite{howlett2003inducingI} and \cite{howlett2004inducingII}).

This paper has two major theoretical goals: First to show that having bases is superfluous in the sense that canonicalisation can be generalized to non-free modules. This construction is functorial in the appropriate sense. The second goal is to show that Howlett-Yin induction of $W$-graphs is well-behaved a functor between module categories of $W$-graph-algebras that satisfies various properties one hopes for when a functor is  called \enquote{induction}, for example transitivity and a Mackey theorem.
\end{abstract}

\section{Introduction}

The ring $\IZ[v^{\pm 1}]$ of Laurent polynomials has an involutive automorphism $\overline{\phantom{m}}$ defined by $\overline{v}:=v^{-1}$.

If $M$ is a free $\IZ[v^{\pm 1}]$-module equipped with an $\overline{\phantom{m}}$-semilinear involution $\iota$ and a "standard" basis $(t_x)_{x\in X}$ then a \emph{canonical basis w.r.t. $(t_x)$ and $\iota$} in the sense of Lusztig is a basis $(c_x)$ of $M$ such that $\iota(c_x)=c_x$ and $c_x \in t_x + \sum_{y\in X} v\IZ[v] t_y$ hold.

Kazhdan and Lusztig showed in \cite{KL} that the Iwahori-Hecke algebra of any Coxeter group $(W,S)$ has a canonical basis w.r.t. the standard basis $(T_w)_{w\in W}$ and the involution $\iota(T_w) = T_{w^{-1}}^{-1}$ which is now known simply as the Kazhdan-Lusztig basis. (Note that Lusztig used a slightly different construction in \cite{lusztig2003hecke} which essentially replaces $v\IZ[v]$ with $v^{-1}\IZ[v^{-1}]$ though this does not change results significantly) The special features of the action of the Hecke algebra on itself w.r.t. this basis are captured in the definition of $W$-graphs in the same paper.

In \cite{howlett2003inducingI} Howlett and Yin showed that given any parabolic subgroup $W_J\leq W$ and a $W_J$-graph $(\mathfrak{C},I,m)$ representing the $H_J$-module $V$, then the induced module $\Ind_{H_J}^H(V) := H\otimes_{H_J} V$ is also represented by a $W$-graph. They constructed the $W$-graph explicitly in terms of a canonical basis of $\Ind_{H_J}^H(V)$ and developed their ideas of inducing $W$-graphs further in \cite{howlett2004inducingII}.

In \cite{Gyoja} Gyoja proved that given any \emph{finite} Coxeter group $(W,S)$ all complex representations of the Hecke algebra can in fact be realized by a $W$-graph. His proof was not constructive but introduced the $W$-graph algebra as an auxiliary object which I investigated further in my thesis \cite{hahn2013diss} and in my previous paper \cite{hahn2014wgraphs1}. The fundamental property of the $W$-graph algebra $\Omega$ is that the Hecke-algebra is canonically embedded into $\IZ[v^{\pm 1}]\Omega$ in such a way that a representation of $H$ given by a $W$-graph canonically extends to a representation of $\Omega$. And conversely given an $\Omega$-module with a sufficiently nice basis, a $W$-graph can be obtained that realises the associated representation of $H$ (see \ref{w_graph_algebra:H_vs_Omega_modules} for details). In this sense $W$-graphs can (and I'm advocating that they should) be understood not as combinatorial objects encoding certain matrices but as modules of an algebra.

\medbreak
This paper is organised as follows: Section \ref{section:canonicalisation} is about modules over (generalized) Laurent polynomial rings equipped with an $\overline{\phantom{m}}$-semilinear involution. It defines canonical modules and canonicalisations of modules. The main theorem in this section is theorem \ref{canonicalisation_from_shadows} which proves a sufficient condition to recognize canonical modules and also shows that under the conditions present in the context of Hecke algebras (though no reference to Hecke algebras is made in this section) the canonicalisation is unique and functorial w.r.t. positive maps.

Section \ref{section:hecke_wgraph_algebras} recalls the definition of Iwahori-Hecke algebras, $W$-graphs and $W$-graph algebras.

Section \ref{section:induction} proves that Howlett-Yin induction is well-defined as a map $\Omega_J\textbf{-mod} \to \Omega\textbf{-mod}$. The proof is inspired by Lusztig's elegant treatment of the $\mu$-values in \cite{lusztig2003hecke} instead of the more laborious proof in Howlett and Yin's paper. The proof in the style of Lusztig has the additional bonus that it provides an algorithm to compute $p$-polynomials and $\mu$-values without having to compute $r$-polynomials as an intermediate step. Specifically, this is Algorithm \ref{algorithm:p_mu}. It is shown how this theorem recovers earlier results, including Howlett and Yin's. As an application it is proven that the $W$-graph algebra associated to a parabolic subgroup $W_J\leq W$ can be canonically identified with a subalgebra of the $W$-graph algebra of $W$.

Section \ref{section:categorial_properties} then proves that Howlett-Yin induction has many of the nice properties one expects: it is a indeed a functor between the two module categories, it can be represented as tensoring with a certain bimodule, it satisfies a transitivity property and an analogue of the Mackey-theorem.

Section \ref{section:applications} then applies these findings. An improved, more efficient algorithm to compute $\mu$-values is given which generalises ideas from Geck's PyCox software (see \cite{geck2012pycox}). Additionally a very short proof of a result of Geck on induction of Kazhdan-Lusztig cells (from \cite{geck2003induction}) is given.

\section{Canonicalisation of modules}\label{section:canonicalisation}

Fix a commutative ring $k$ and a totally ordered, abelian group $(\Gamma,+,\leq)$, i.e. $\leq$ is a total order on $\Gamma$ such that $x\leq y \implies x+z\leq y+z$ holds.

Consider the $k$-algebra $\mathcal{A}:=k[\Gamma]$. As is common when considering group algebras of additively written groups, we will denote the group element $\gamma\in\Gamma$ as $v^\gamma\in\mathcal{A}$ and think of $\mathcal{A}$ as the ring of \enquote{generalized Laurent polynomials in $v$} with coefficients in $k$. This $k$-algebra has an involutive automorphism $\overline{\phantom{m}}$ defined by $\overline{v^\gamma}:=v^{-\gamma}$.

We also consider the smash product $\widehat{\mathcal{A}}:=\mathcal{A}\rtimes \langle\iota\rangle$ where $\langle\iota\rangle$ is a cyclic group of order two acting as $\overline{\phantom{m}}$ on $\mathcal{A}$.\footnotemark An $\widehat{\mathcal{A}}$-module is therefore the same as an $\mathcal{A}$-module $M$ equipped with an $\overline{\phantom{m}}$-semilinear involution $\iota: M\to M$.
\footnotetext{
Remember that given any $k$-algebra $A$, monoid $G$, and any monoid homomorphism $\phi: G\to(\End(A),\circ)$ the algebra $A\rtimes_\phi G$ is defined as the $k$-algebra that has $A \otimes_k k[G]$ as its underlying $k$-module and extends the multiplication of $A$ and $k[G]$ via $(a\otimes g)\cdot(b\otimes h) := a\phi(g)(b) \otimes gh$. It is also denoted $A\rtimes G$ if the morphism $\phi$ is understood.

Saying that $V$ is a $A\rtimes_\phi G$ module is equivalent to saying that $V$ is an $A$-module and comes with an action of $G$ on $V$ such that $g(a\cdot v) = \phi(g)(a)\cdot gv$ holds.
}

\begin{definition}
Define $\mathcal{A}_{<0}$ as the $k$-submodule of $\mathcal{A}$ spanned by all $v^\gamma$ with $\gamma\geq 0$. Similarly define $\mathcal{A}_{\geq 0}, \mathcal{A}_{\leq 0}$ and $\mathcal{A}_{<0}$.
	
Note that $\mathcal{A}_{\geq 0}$ is a subalgebra of $\mathcal{A}$ and $\mathcal{A}_{>0}$ an ideal inside it.
\end{definition}

\begin{definition}
Let $M$ be an arbitrary $k$-module. The scalar extension $\mathcal{A}\otimes_k M$ is naturally an $\mathcal{A}$-module and via $\iota(a\otimes m) := \overline{a} \otimes m$ it is also an $\widehat{\mathcal{A}}$-module which will be denoted by $\widehat{M}$. Any $\widehat{\mathcal{A}}$-module $V$ that is isomorphic to $\widehat{M}$ for some $M\in k\textbf{-mod}$ is called a \emph{canonical module} and any $\widehat{\mathcal{A}}$-module isomorphism $c: \widehat{M} \to V$ is called a \emph{canonicalisation of $V$}.

If $M$ is free and $(b_x)_{x\in X}$ is a basis of $M$, then the image of $(1\otimes b_x)_{x\in X}$ under a canonicalisation $c$ is called the \emph{canonical basis of $V$} associated to $(b_x)_{x\in X}$ and $c$.
\end{definition}

\begin{remark}
Note that $M\mapsto \widehat{M}$ and $f\mapsto \id_\mathcal{A}\otimes f$ is a faithful functor $k\textbf{-mod} \to \widehat{\mathcal{A}}\textbf{-mod}$. It is left adjoint to the fixed point functor $V\mapsto\Set{x\in V | \iota(v) = v}$.
\end{remark}

\begin{example}
As we will see, the Kazhdan-Lusztig basis $(C_w)_{w\in W}$ of an Iwahori-Hecke algebra $H=H(W,S)$ is a canonical basis of the $\widehat{\IZ[v^{\pm1}]}$-module $V:=H$ where $\iota:H\to H$ is defined by $\iota(T_w)=T_{w^{-1}}^{-1}$.

The module $M$ is the $\IZ$-span of the \enquote{standard basis} $M:=\bigoplus_{w\in W} \IZ T_w$. Kazhdan and Lusztig's classical result that an $\iota$-invariant $\IZ[v^{\pm1}]$-basis of $H$ exists is now precisely the statement that $T_w\mapsto C_w$ defines a canonicalisation map. In this sense $(C_w)_{w\in W}$ is the canonical basis of $H$ associated to the standard basis.
\end{example}

\begin{example}
Canonical bases of quantum group representations in the sense of Lusztig and Kashiwara (c.f. \cite{lusztig1990canonical} and \cite{kashiwara1990crystalizing}) are examples of canonical basis in the sense of the definition above.

In Lusztig's notation $\textbf{B}\subset\mathcal{L}$ is a canonical basis of the $\widehat{\IZ[v^{\pm1}]}$-module $V:=\mathcal{L}+\overline{\mathcal{L}}$ which is the $\IZ[v^{\pm1}]$-span of any PBW-basis of $\textbf{U}^+$. The $\IZ$-module $M$ corresponds to $\mathcal{L}/v^{-1}\mathcal{L}$ (which is also isomorphic to $\mathcal{L}\cap\overline{\mathcal{L}}$ as well as $\overline{\mathcal{L}}/v\overline{\mathcal{L}}$) and the standard basis of $M$ is the image of any PBW-basis of $\textbf{U}^+$ in $\mathcal{L}/v^{-1}\mathcal{L}$.
\end{example}

\begin{example}
Obviously most $\widehat{\mathcal{A}}$-modules are not canonical. For example the only canonical $\widehat{\mathcal{A}}$-module that is finitely generated over $k$ is the zero module. Hence $V=k[i]=k[x]/(x^2+1)$ is not canonical where $v$ operates as multiplication by $i$ and $\iota$ operates as $i\mapsto -i$. Therefore the question arises how one can recognize if a given module is canonical and how one can find a canonicalisation.
\end{example}

An obvious restatement of the definition is the following:
\begin{proposition}
Let $V$ be an arbitrary $\widehat{\mathcal{A}}$-module. Then $V$ is canonical if and only if there exists $k$-submodule $M$ of $V$ such that
\begin{enumerate}
	\item $V = \bigoplus_{\gamma\in\Gamma} v^\gamma M$ as a $k$-module.
	\item $\iota$ operates as $1$ on $M$, i.e. $\iota\cdot m = m$ for all $m\in M$.
\end{enumerate}
In this case $c: \widehat{M} \to V, a\otimes m \mapsto am$ is a canonicalisation.
\end{proposition}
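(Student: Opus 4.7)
The plan is to prove the two directions separately, with the backward direction essentially constructing the claimed canonicalisation explicitly and the forward direction pulling back a suitable submodule along a given canonicalisation. The heart of the argument is that the decomposition $\mathcal{A} = \bigoplus_{\gamma \in \Gamma} k v^\gamma$ makes $\widehat{N}$ for any $k$-module $N$ decompose as $\bigoplus_{\gamma\in\Gamma} v^\gamma \otimes N$, and that $\iota$ acts trivially on the distinguished copy $1\otimes N$.

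For the forward direction, I would assume a canonicalisation $c: \widehat{N}\to V$ is given and set $M := c(1\otimes N) \subseteq V$. Since $c$ is $\mathcal{A}$-linear, $c(v^\gamma\otimes N) = v^\gamma M$, and applying $c$ to the $k$-module decomposition $\widehat{N} = \bigoplus_\gamma v^\gamma\otimes N$ yields condition (a). Condition (b) follows because $\iota(1\otimes n) = \overline{1}\otimes n = 1\otimes n$ on $\widehat{N}$ and $c$ commutes with $\iota$.

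For the backward direction, I would suppose $M\subseteq V$ is given satisfying (a) and (b), and define $c: \widehat{M}\to V$ by $a\otimes m\mapsto am$. This map is $k$-bilinear and $\mathcal{A}$-linear by construction. It is bijective as a $k$-module map because the assumption $V = \bigoplus_\gamma v^\gamma M$ says exactly that $V$ is a free $k$-module on the images of the $k$-basis $\{v^\gamma\otimes m : \gamma\in\Gamma,\, m\in \text{$k$-basis of } M\}$ ... more cleanly: $c$ sends the direct sum decomposition $\widehat{M} = \bigoplus_\gamma v^\gamma\otimes M$ identically onto $\bigoplus_\gamma v^\gamma M = V$ summand by summand via the $k$-isomorphism $v^\gamma\otimes m \mapsto v^\gamma m$. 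Finally, compatibility with $\iota$ is the only nontrivial verification: for $a\in\mathcal{A}$ and $m\in M$, using semilinearity of $\iota$ on $V$ and condition (b),
\[
\iota(c(a\otimes m)) = \iota(am) = \overline{a}\,\iota(m) = \overline{a}m = c(\overline{a}\otimes m) = c(\iota(a\otimes m)),
\]
so $c$ is $\widehat{\mathcal{A}}$-linear, hence a canonicalisation.

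There is no real obstacle here; the statement is essentially a reformulation of the definition once one notes that giving a canonicalisation $c$ is the same data as specifying $M := c(1\otimes N)$, since $N$ can be recovered from $M$ (and $\widehat{N}$ from the $\mathcal{A}$-module structure). The only point requiring a moment's attention is the semilinearity check above, where both conditions (a) (to make $c$ well-defined as an $\mathcal{A}$-module isomorphism) and (b) (to force $c$ to intertwine $\iota$) are used.
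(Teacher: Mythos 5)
Your proof is correct and fills in exactly the details that the paper omits; the paper presents this proposition as ``an obvious restatement of the definition'' with no proof given. Your unfolding of both directions — recovering $M$ as $c(1\otimes N)$ in the forward direction and verifying $\mathcal{A}$-linearity, bijectivity from the grading condition, and $\iota$-equivariance from condition (b) in the backward direction — is precisely the intended argument.
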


\begin{definition}
Let $V$ be an $\widehat{\mathcal{A}}$-module and $(X,\leq)$ a poset. A \emph{$X$-graded shadow of $V$} is a collection $(M_x)_{x\in X}$ of $k$-submodules of $V$ such that
\begin{enumerate}
	\item $V = \bigoplus_{\substack{x\in X \\ \gamma\in\Gamma}} v^\gamma M_x$ as a $k$-module and
	\item $\iota\cdot m_z \in m_z + \sum_{y<z} \mathcal{A}M_y$ for all $m_z\in M_z$
\end{enumerate}
\end{definition}

\begin{remark}
In light of the above proposition a shadow is something like a canonicalisation \enquote{up to lower order error terms}. Theorem \ref{canonicalisation_from_shadows} shows that these error terms can be corrected by a \enquote{triangular base change} if the poset satisfies a finiteness condition. It therefore provides a sufficient criterion for the existence of a canonicalisation which is inspired by the construction of the Kazhdan-Lusztig basis of Iwahori-Hecke algebras as well as similar constructions by Howlett and Yin, Deodhar, Geck and many more. Theorem \ref{canonicalisation_from_shadows} is  precisely the common thread in all these constructions.
\end{remark}

First we need a lemma.
\begin{lemma}\label{positive_symmetric_implies_zero}
Let $V$ be an $\widehat{\mathcal{A}}$-module, $(X,\leq)$ a partially ordered set and $(M_x)_{x\in X}$ a $X$-graded shadow of $V$. Furthermore let $f\in V$ be an arbitrary element with $f = \sum_{x\in X_0} f_x$ for some finite $X_0\subseteq X$ and $f_x \in\mathcal{A}_{>0} M_x$.

If $f$ satisfies $\iota\cdot f = f$, then $f_x = 0$ for all $x\in X_0$.
\end{lemma}
\begin{proof}
Assume the contrary. Wlog we can also assume $f_x\neq 0$. Otherwise we could just shrink the set $X_0$. Let $X_1\subseteq X_0$ be the subset of all maximal elements of $X_0$ and $X_2:=X_0\setminus X_1$. Thus
\[f \in \sum_{x\in X_1} \mathcal{A}_{>0} M_x + \sum_{x\in X_2} \mathcal{A}_{>0} M_x \subseteq \sum_{x\in X_1} \mathcal{A}_{>0} M_x + \sum_{x\in X_2} \sum_{y\leq x} \mathcal{A} M_y\]
Because $f_x\in\mathcal{A}_{>0} M_x$, it is a $\mathcal{A}_{>0}$-linear combination of elements of $M_x$, say $f_x = \sum_{i=1}^{n_x} a_{ix} m_{ix}$ for some $a_{ix}\in\mathcal{A}_{>0}$ and $m_{ix}\in M_x$.

Also note that $\iota$ maps every subspace of the form $\sum_{y\leq x} \mathcal{A} M_y$ into itself because $(M_x)_{x\in X}$ is an $X$-graded shadow. Thus
\begin{align*}
	\iota f &\in \sum_{x\in X_1, i=1..n_x} \iota(a_{ix} m_{ix}) + \iota(\sum_{x\in X_2} \sum_{y\leq x} \mathcal{A} M_y) \\
	&= \sum_{x\in X_1, i=1..n_x} \overline{a_{ix}} \iota(m_{ix}) + \sum_{x\in X_2} \sum_{y\leq x} \mathcal{A} M_y \\ 
	&= \sum_{x\in X_1, i=1..n_x} \overline{a_{ix}} m_{ix} + \sum_{x\in X_2} \sum_{y\leq x} \mathcal{A} M_y \\
	&\subseteq \sum_{x\in X_1} \mathcal{A}_{<0} M_x + \sum_{x\in X_2} \sum_{y\leq x} \mathcal{A} M_y
\end{align*}

$X_1$ is a non-empty subset because $X_0$ is non-empty and finite. 
Comparing the $x$-components of $f$ and $\iota f$ for $x\in X_1$ we find $f_x\in\mathcal{A}_{>0}M_x \cap \mathcal{A}_{<0}M_x = 0$ contrary to the assumption $f_x\neq 0$.
\end{proof}

\begin{theorem}\label{canonicalisation_from_shadows}
Let $(X,\leq)$ be a poset such that $(-\infty,y]:=\Set{x\in X | x\leq y}$ is finite for all $y\in X$.
\begin{enumerate}
	\item If an $\widehat{\mathcal{A}}$-module $V$ has a $X$-graded shadow $(M_x)_{x\in X}$ then it is canonical and there exists a unique canonicalisation $c: \widehat{M} \to V$ where $M := \bigoplus_{x\in X} M_x$ such that $c(1\otimes m)\in m+\mathcal{A}_{>0} M$ for all $m\in M$.
	
	More precisely it satisfies $c(1\otimes m_x) \in m_x + \sum_{y<x} \mathcal{A}_{>0} M_y$ for all $m_x\in M_x$.
	\item The canonicalisation above depends functorially on the shadow w.r.t. \emph{positive maps}. More precisely let $V_1,V_2$ be two $\widehat{\mathcal{A}}$-modules with $X$-graded shadows $(M_{i,x})_{x\in X}$ and canonicalizations $c_i: \widehat{M_i}\to V_i$ and let $\phi: V_1\to V_2$ be a $\widehat{\mathcal{A}}$-linear map with $\phi(\mathcal{A}_{\geq 0} M_1)\subseteq\mathcal{A}_{\geq 0} M_2$.
	
	There is an induced map $M_1 = \mathcal{A}_{\geq 0}M_1 / \mathcal{A}_{>0} M_1 \xrightarrow{\phi} \mathcal{A}_{\geq 0} M_2/\mathcal{A}_{>0} M_2 = M_2$ and this induces an $\widehat{\mathcal{A}}$-linear map $\widehat{\phi}: \widehat{M_1}\to\widehat{M_2}$. This map satisfies $c_2\circ \widehat{\phi} = \phi\circ c_1$ holds, i.e. the diagram in Figure \ref{fig:Canonicalisation} commutes.
	\begin{figure}[ht]
	\centering
	\begin{tikzpicture}
		\node (M1hat) at (0,2) {$\widehat{M_1}$};
		\node (M2hat) at (2,2) {$\widehat{M_2}$};
		\node (V1) at (0,0) {$V_1$};
		\node (V2) at (2,0) {$V_2$};
		
		\path[->,font=\scriptsize]
			(M1hat) edge node[above]{$\widehat{\phi}$} (M2hat)
			(V1) edge node[below]{$\phi$} (V2)
			(M1hat) edge node[left]{$c_1$} node[right]{$\isomorphic$} (V1)
			(M2hat) edge node[left]{$c_2$} node[right]{$\isomorphic$} (V2);

	\end{tikzpicture}
	\caption{Functoriality of canonicalisation of shadows}%
	\label{fig:Canonicalisation}%
	\end{figure}
\end{enumerate}
\end{theorem}
Before we begin the proof observe that any $\mathcal{A}$-linear map $f: \mathcal{A}M_z \to \mathcal{A}M_y$ is uniquely determined by its restriction $M_z \to \mathcal{A}M_y$ which is a $k$-linear map and can be written as $f(m_z)=\sum_{\gamma\in\Gamma} v^\gamma f_\gamma(m_z) $ with uniquely determined $k$-linear maps $f_\gamma: M_z\to M_y$ that have the property that $\Set{\gamma | f_\gamma(m_z)\neq 0}$ is finite for each $m_z\in M_z$ so that the sum is indeed well-defined. Having this way of writing these maps in mind we can define $\overline{f}:\mathcal{A}M_z\to \mathcal{A}M_y$ to be the $\mathcal{A}$-linear map with $\overline{f}(m_z):=\sum_{\gamma\in\IZ} v^{-\gamma} f_\gamma(m_z)$ for all $m_z\in M_z$. Note that $\overline{\phantom{f}}$ is compatible with composition, i.e. $\overline{f\circ g}=\overline{f}\circ\overline{g}$.

We will use this notation for the proof to simplify the notation.

\begin{proof}
The uniqueness of $c$ follows from the above lemma because if $c,c':\widehat{M}\to V$ are two canonicalisations satisfying the stated property then $f:=c(1\otimes m)-c'(1\otimes m)$ is an element of $\mathcal{A}_{>0}M$ with $\iota\cdot f= f$ so that $f=0$ by Lemma \ref{positive_symmetric_implies_zero}.

\medbreak
Concerning the existence consider the $\mathcal{A}$-linear maps $\rho_{yz}: \mathcal{A}M_z \to \mathcal{A}M_y$ defined by
\[\forall m_z\in M_z: \iota\cdot m_z = \sum_{y} \rho_{yz}(m_z).\]
By assumption $\rho_{yz}=0$ unless $y\leq z$ and $\rho_{zz}(m_z)=m_z$.

Following the usual construction of the Kazhdan-Lusztig polynomials and $R$-polynomials we will recursively construct $\mathcal{A}$-linear maps $\pi_{yz}: \mathcal{A}M_z \to \mathcal{A}_{\geq 0} M_y$ such that:
\begin{itemize}
	\item $\pi_{yz}=0$ unless $y\leq z$ and $\pi_{zz}(m_z)=m_z$
	\item $\pi_{xz} = \sum_{x\leq y\leq z} \rho_{xy} \circ \overline{\pi_{yz}}$
\end{itemize}

The first step is to observe
\begin{equation}
\sum_{x\leq y\leq z} \rho_{xy} \circ \overline{\rho_{yz}} = \begin{cases} \id_{\mathcal{A}M_x} & \text{if}\,x=z \\ 0 & \text{otherwise}\end{cases}
\label{eq:Canonicalisation_R1}
\end{equation}
This follows from the fact that $\iota$ has order two:
\begin{align*}
	m_z &= \iota\cdot\iota\cdot m_z \\
	&= \sum_{y\leq z} \iota \cdot\underbrace{\rho_{yz}(m_z)}_{\in \mathcal{A}M_y} \\
	&= \sum_{x\leq y\leq z} (\rho_{xy}\circ \overline{\rho_{yz}})(m_z)
\end{align*}

Fix $z\in X$. Define $\pi_{zz}(m_z):=m_z$ and $\pi_{xz}:=0$ for all $x\not\leq z$. If $x<z$ then assume inductively that $\pi_{yz}$ is already known for all $x<y\leq z$. It is therefore possible to define
\[\alpha_{xz} := \sum_{x<y\leq z} \rho_{xy} \circ \overline{\pi_{yz}}.\]
This map satisfies
\begin{align*}
	\alpha_{xz} &= \sum_{x<y\leq z} \rho_{xy} \circ \overline{\sum_{y\leq w\leq z} \rho_{yw} \circ \overline{\pi_{wz}}} \\
	&= \sum_{x<y\leq w\leq z} \rho_{xy} \circ \overline{\rho_{yw}} \circ \pi_{wz} \\
	&= \sum_{x<w\leq z} \left(\sum_{x<y\leq w} \rho_{xy} \circ \overline{\rho_{yw}} \right) \circ \pi_{wz} \\
	&= \sum_{x<w\leq z} \left(0 - \rho_{xx}\circ\overline{\rho_{xw}} \right) \circ \pi_{wz} \\
	&= \sum_{x<w\leq z} -\overline{\rho_{xw}} \circ \pi_{wz} \\
	&= -\overline{\alpha_{xz}}
\end{align*}
Therefore we obtain $\alpha_0=0$ in the decomposition $\alpha_{xz} = \sum_{\gamma\in\Gamma} v^\gamma \alpha_\gamma$. Now define $\pi_{xz} := \sum_{\gamma>0} v^\gamma \alpha_\gamma$ so that $\alpha_{xz} = \pi_{xz} - \overline{\pi_{xz}}$ holds. This shows that $\pi_{xz}(m_z)\in \mathcal{A}_{>0} M_x$ as well as
\[\sum_{x\leq y\leq z} \rho_{xy} \circ \overline{\pi_{yz}} = \rho_{xx} \circ \overline{\pi_{xz}} + \alpha_{xz} = \overline{\pi_{xz}} + \alpha_{xz} = \pi_{xz}.\]
Thus the existence of all $\pi_{xz}$ is established and we can define the $\mathcal{A}$-linear map $c: \widehat{M} \to V$ by
\[\forall m_z\in M_z: c(1\otimes m_z) := \sum_{x\leq z} \pi_{xz}(m_z) = m_z + \sum_{x<z} \pi_{xz}(m_z).\]
It is bijective because it is "upper triangular with unit diagonal". The map is also $\widehat{\mathcal{A}}$-linear because
\begin{align*}
	\iota \cdot c(1\otimes m_z) &= \sum_{y\leq z} \iota\cdot \pi_{yz}(m_z) \\
	&= \sum_{x\leq y\leq z} \rho_{xy} \overline{\pi_{yz}}(m_z) \\
	&= \sum_{x\leq z} \pi_{xz}(m_z) \\
	&= c(1\otimes m_z) \\
	&= c(\iota\cdot(1\otimes m_z))
\end{align*}

\bigbreak
Finally we have to show that $c$ is functorial. Let $M_1, M_2, \phi$ be as in the statement of the theorem and fix an arbitrary $m_1\in M_1$. Then $c_1(m_1) \in  m_1 + \mathcal{A}_{>0} M_1$ so that $\phi(c_1(m_1)) \in \phi(m_1) + \mathcal{A}_{>0} M_2$. Also $\widehat{\phi}(m_1)\in \phi(m_1)+\mathcal{A}_{>0}M_2$ by construction of $\widehat{\phi}$ so that $c_2(\widehat{\phi}(m_1))\in \widehat{\phi}(m_1)+\mathcal{A}_{>0}M_2 = \phi(m_1)+\mathcal{A}_{>0}M_2$. Therefore $f:=\phi(c_1(m_1)) - c_2(\widehat{\phi}(m_1))\in\mathcal{A}_{>0}M_2$. Additionally, since all four maps are $\widehat{\mathcal{A}}$-linear and $m_1\in\widehat{M_1}$ is $\iota$-invariant $f$ satisfies $\iota f = f$ so that Lemma \ref{positive_symmetric_implies_zero} implies $f=0$. This proves the commutativity of the diagram.
\end{proof}

\begin{corollary}
Let $(X,\leq)$ be a poset such that $\Set{x\in X | x\leq y}$ is finite for all $y\in X$. Furthermore let $V$ be an $\widehat{\mathcal{A}}$-module, $U$ an $\widehat{\mathcal{A}}$-submodule of $V$ and $(M_x)_{x\in X}$ an $X$-graded shadow for $V$. Define $N_x := U\cap M_x$ for all $x\in X$.

If $U$ is generated as an $\mathcal{A}$-module by $\sum_{x\in X} N_x$, then $(N_x)$ is an $X$-graded shadow for $U$, $(M_x/N_x)$ is an $X$-graded shadow for $V/U$, the canonicalisation $\widehat{M}\to V$ restricts to the canonicalisation $\widehat{N} \to U$ and induces the canonicalisation $\widehat{M / N} \to V/U$ on the quotients.
\end{corollary}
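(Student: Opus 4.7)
The plan is to verify the two shadow claims directly and then read off both canonicalisation claims from the functoriality part of Theorem \ref{theorem:canonicalisation}, applied to the inclusion $\iota_U:U\hookrightarrow V$ and the projection $\pi:V\twoheadrightarrow V/U$. Both maps are manifestly $\widehat{\mathcal{A}}$-linear, and both are positive with respect to the relevant shadows: for $\iota_U$ this is trivial since $N:=\sum_x N_x\subseteq M:=\sum_x M_x$, and for $\pi$ one has $\pi(\mathcal{A}_{\geq 0}M)=\mathcal{A}_{\geq 0}(M/N)$ because $\pi$ is $\mathcal{A}$-linear and surjects $M$ onto $M/N$.

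For the first claim, that $(N_x)$ is an $X$-graded shadow of $U$, the hypothesis that $\sum_x N_x$ generates $U$ as an $\mathcal{A}$-module gives $U=\sum_{x,\gamma}v^\gamma N_x$, and the directness of this sum is inherited from $V=\bigoplus_{x,\gamma}v^\gamma M_x$ because $v^\gamma N_x\subseteq v^\gamma M_x$. For the semi-invariance condition, fix $n_z\in N_z$. Then $\iota\cdot n_z\in U$ because $U$ is $\widehat{\mathcal{A}}$-stable, and $\iota\cdot n_z-n_z\in\sum_{y<z}\mathcal{A}M_y$ by the shadow property of $V$; comparing these two statements via the direct sum decomposition of $V$ forces this difference to lie in $\sum_{y<z}\mathcal{A}N_y$ as required. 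The key auxiliary identity behind this last step is $v^\gamma M_x\cap U=v^\gamma N_x$, which I expect to be the main point to spell out and which follows from writing elements of $U$ in the newly established form $\sum v^\delta n_{y,\delta}$ and invoking the directness of the decomposition of $V$.

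For the second claim, that $(M_x/N_x)$ is an $X$-graded shadow of $V/U$, the identity $v^\gamma M_x\cap U=v^\gamma N_x$ just established yields $V/U=\bigoplus_{x,\gamma}v^\gamma(M_x/N_x)$, and the semi-invariance condition is immediate from reducing the corresponding identity for $V$ modulo $U$.

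Finally, with both shadows in place, Theorem \ref{theorem:canonicalisation}(b) applied to $\iota_U$ provides a commutative square in which the induced $k$-level map $N\to M$ is just the natural inclusion, so the canonicalisation $c_V:\widehat{M}\to V$ sends $\widehat{N}$ into $U$ and agrees there with the canonicalisation $c_U:\widehat{N}\to U$ constructed from the shadow $(N_x)$. Applying the same theorem to $\pi$ produces a commutative square in which the induced $k$-level map $M\to M/N$ is the natural projection, so $c_V$ descends to the canonicalisation $c_{V/U}:\widehat{M/N}\to V/U$ on the quotient. This gives all four assertions of the corollary.
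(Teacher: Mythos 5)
Your proposal is correct and takes the same route as the paper: the paper's proof is simply ``This follows immediately from functoriality of canonicalisation applied to the embedding $U\hookrightarrow V$ and the quotient map $V\to V/U$ respectively.'' You additionally spell out the verification that $(N_x)$ and $(M_x/N_x)$ are indeed $X$-graded shadows (which the paper takes as obvious, though it is a genuine hypothesis before functoriality can be invoked), and your key identity $v^\gamma M_x\cap U=v^\gamma N_x$ is correct and is exactly the point needed — note it actually holds for trivial reasons ($U$ being $\mathcal{A}$-stable and $v^\gamma$ a unit) independently of the generation hypothesis, which is instead what guarantees $U=\sum_{x,\gamma}v^\gamma N_x$ in the first place.
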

\begin{proof}
This follows immediately from functoriality of canonicalisation applied to the embedding $U\hookrightarrow V$ and the quotient map $V\to V/U$ respectively.
\end{proof}

\begin{remark}
In terms of canonical bases this corollary recovers the theorem that if $(t_x)_{x\in X}$ is an $\mathcal{A}$-basis for $V$ and $U$ is spanned as an $\mathcal{A}$-module by a subset $(t_x)_{x\in Y}$ of that basis, then the canonical basis for $U$ is the subset $(c_x)_{x\in Y}$ of the canonical basis $c_x:=c(t_x)$ of $V$ and the canonical basis of the quotient $V/U$ is spanned by the vectors $(c_x)_{x\in X\setminus Y}$ (more precisely by their images under the quotient map $V\to V/U$).
\end{remark}
\begin{remark}
An important special case of this corollary is the case where $U$ is of the form $U=\sum_{x\in I} \mathcal{A}M_x$ for some order ideal $I\trianglelefteq X$ (i.e. a subset with the property $x\in I \wedge y\leq x \implies y\in I$). Note that all such $U$ are $\widehat{\mathcal{A}}$-submodules by definition of $X$-graded shadows.
\end{remark}

\section{Hecke algebras, \texorpdfstring{$W$}{W}-graphs and \texorpdfstring{$W$}{W}-graph algebras}\label{section:hecke_wgraph_algebras}

For the rest of the paper fix a (not necessarily finite) Coxeter group $(W,S)$, a totally ordered, additive group $\Gamma$ (which soon will be further restricted to be $\IZ$) and a weight function $L:W\to\Gamma$, i.e. a function with $l(xy)=l(x)+l(y) \implies L(xy)=L(x)+L(y)$. We will use the shorthand $v_s := v^{L(s)}\in\IZ[\Gamma]$ and the standard assumption $L(s)>0$ for all $s\in S$.

\begin{definition}[c.f. \cite{geckjacon}]
The \emph{Iwahori-Hecke algebra} $H=H(W,S,L)$ is the $\mathbb{Z}[\Gamma]$-algebra  which is freely generated by $(T_s)_{s\in S}$ subject only to the relations
\[T_s^2 = 1 + (v_s-v_s^{-1}) T_s\quad\textrm{and}\]
\[\underbrace{T_s T_t T_s \ldots}_{m_{st}\,\text{factors}} = \underbrace{T_t T_s T_t \ldots}_{m_{st}\,\text{factors}}\]
where $m_{st}$ denotes the order of $st\in W$.

\medbreak
Because of the braid relations and Matsumoto's theorem (c.f. \cite{matsumoto1964}), we can define the \emph{standard basis elements} as
\[T_w := T_{s_1} T_{s_2} \cdots T_{s_l}\]
where $w=s_1 s_2 \cdots s_l$ is any reduced expression of $w\in W$ in the generators. Note that $T_1=1$.

\medbreak
For each parabolic subgroup $W_J\leq W$ the Hecke algebra $H(W_J,J,L_{|W_J})$ will be identified with the \emph{parabolic subalgebra} $H_J := \operatorname{span}_{\IZ[\Gamma]}\Set{T_w | w\in W_J}\subseteq H$.
\end{definition}

\begin{definition}[c.f. \cite{KL} and \cite{geckjacon}]
Let $k$ be a commutative ring.
A \emph{$W$-graph with edge weights in $k$} is a triple $(\mathfrak{C},I,m)$ consisting of a finite set $\mathfrak{C}$ of \emph{vertices}, a \emph{vertex labelling} map $I: \mathfrak{C}\to \{J \mid J\subseteq S\} $ and a family of \emph{edge weight} matrices $m^s\in k^{\mathfrak{C}\times\mathfrak{C}}$ for $s\in S$ such that the following conditions hold:
\begin{enumerate}
	\item $\forall x,y\in\mathfrak{C}: m_{xy}^s \neq 0 \implies s\in I(x)\setminus I(y)$.
	\item The matrices
	\[\omega(T_s)_{xy} := \begin{cases} -v_s^{-1} & \textrm{if}\;x=y, s\in I(x) \\ v_s & \textrm{if}\;x=y, s\notin I(x) \\ m_{xy}^s & \textrm{otherwise}\end{cases}\]
	induce a matrix representation $\omega: k[v^{\pm1}]H\to k[v^{\pm1}]^{\mathfrak{C}\times\mathfrak{C}}$.
\end{enumerate}
The associated directed graph is defined as follows: The vertex set is $\mathfrak{C}$ and there is a directed edge $x\leftarrow y$ if and only if $m_{xy}^s\neq 0$ for some $s\in S$. If this is the case, then the value $m_{xy}^s$ is called a \emph{weight} of the edge. The set $I(x)$ is called the \emph{vertex label} of $x$.
\end{definition}

\begin{remark}
In the equal-parameter case (i.e. $\Gamma=\IZ$ and $L(s)=1$ for all $s\in S$) one can show $m_{xy}^s = m_{xy}^t$ for all $s,t\in I(x)\setminus I(y)$ so that it is well-defined to speak of \emph{the} weight of the edge $x\leftarrow y$.

This does no longer hold in the multi-parameter case, so that one could say that the edges have a tuple of weights attached to them.
\end{remark}

\begin{remark}
Note that condition a. and the definition of $\omega(T_s)$ already guarantee $\omega(T_s)^2=1+(v_s-v_S^{-1})\omega(T_s)$ so that the only non-trivial requirement in condition b. is the braid relation $\omega(T_s)\omega(T_t)\omega(T_s)\ldots = \omega(T_t)\omega(T_s)\omega(T_t)\ldots$.
\end{remark}

Given a $W$-graph as above the matrix representation $\omega$ turns $k[\Gamma]^{\mathfrak{C}}$ into a module for the Hecke algebra. It is natural to ask whether a converse is true. In many situations the answer is yes as shown by Gyoja.
\begin{theorem}[c.f. \cite{Gyoja}]
Let $W$ be finite, $K\subseteq\mathbb{C}$ be a splitting field for $W$ and assume $\Gamma=\IZ$ and $L(s)=1$ for all $s\in S$. Then every irreducible representation of $K(v)H$ can be realized as a $W$-graph module for some $W$-graph with edge weights in $K$.
\end{theorem}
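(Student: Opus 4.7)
The plan is to apply Theorem~\ref{theorem:canonicalisation} to each irreducible $V\in\Irr(K(v)H)$, after equipping $V$ with an $\widehat{\mathcal{A}}$-module structure (for $\mathcal{A}:=K[v^{\pm 1}]$) and a suitable shadow. Since $W$ is finite and $K$ is a splitting field, Tits' deformation theorem identifies $\Irr(K(v)H)$ with $\Irr(KW)$ via specialization at $v=1$ and, for each $V$, furnishes an $\mathcal{A}H$-stable $\mathcal{A}$-lattice $L\subset V$ that specializes to the corresponding $KW$-module $V^\circ$. Because the bar involution $\iota$ on $H$ sends $T_s$ to $T_s^{-1}$, it specializes at $v=1$ to the identity on $KW$; consequently $V^\iota$ and $V$ have the same specialization, and Tits' bijection forces $V\isomorphic V^\iota$. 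Unwinding this isomorphism yields a $\overline{\phantom{m}}$-semilinear intertwiner $\tilde\iota:V\to V$, unique up to scalar by Schur's lemma and absolute irreducibility, which after rescaling becomes an honest involution; this makes $V$ (and the lattice $L$) into an $\widehat{\mathcal{A}}$-module.

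Next, choose a basis $(t_x)_{x\in X}$ of $L$ and a partial order on $X$ with the property that $\tilde\iota(t_x)\in t_x+\sum_{y<x}\mathcal{A}\,t_y$, so that the $K$-modules $M_x:=Kt_x$ form an $X$-graded shadow of $V$. Theorem~\ref{theorem:canonicalisation} then yields a $\tilde\iota$-fixed basis $(c_x)$ with $c_x\in t_x+\sum_{y<x}\mathcal{A}_{>0}M_y$. Combining $\tilde\iota(c_x)=c_x$ with $\iota(T_s)=T_s^{-1}=T_s-(v_s-v_s^{-1})$, a direct computation forces each diagonal entry $\omega(T_s)_{xx}$ into $\{v_s,-v_s^{-1}\}$ --- thereby defining the vertex labelling $I(x):=\Set{s\in S | \omega(T_s)_{xx}=-v_s^{-1}}$ --- and forces each off-diagonal entry $m_{xy}^s:=\omega(T_s)_{xy}$ to be $\overline{\phantom{m}}$-invariant, hence an element of $K$.

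The genuine difficulty is the support axiom $m_{xy}^s\neq 0\implies s\in I(x)\setminus I(y)$. Extracting the off-diagonal entries of $\omega(T_s)^2=1+(v_s-v_s^{-1})\omega(T_s)$ gives
\[\bigl(\omega(T_s)_{xx}+\omega(T_s)_{yy}-(v_s-v_s^{-1})\bigr)\,m_{xy}^s=-\sum_{z\neq x,y}\omega(T_s)_{xz}\omega(T_s)_{zy},\]
whose right-hand side lies in $K$. Since the parenthesised factor lies in $K$ only when $\{\omega(T_s)_{xx},\omega(T_s)_{yy}\}=\{v_s,-v_s^{-1}\}$, this forces the unordered pair of diagonal entries to be correct, but does not pin down the asymmetric assignment the axiom demands. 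Resolving this requires more than the canonicalisation formalism alone: one natural route is to realise $V$ as a subquotient of a module already known to carry a $W$-graph --- for instance a Kazhdan--Lusztig cell module of the regular representation, whose availability in the equal-parameter case is where the hypothesis $L\equiv 1$ enters essentially --- and to transport the $W$-graph structure along via the corollary to Theorem~\ref{theorem:canonicalisation}. Securing this asymmetric support condition is the heart of Gyoja's original argument.
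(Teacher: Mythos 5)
The paper does not prove this theorem; it is stated as a citation of Gyoja's result, and the paper explicitly describes Gyoja's original argument as non-constructive and as the origin of the $W$-graph algebra $\Omega$. So there is no in-paper proof to compare against, and your attempt must be judged on its own terms.

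You honestly flag the support axiom $m_{xy}^s\neq 0\implies s\in I(x)\setminus I(y)$ as the unresolved heart of the matter, and that self-assessment is correct. But there are further gaps upstream of that point which you do not flag. First, the assertion that ``a direct computation forces each diagonal entry $\omega(T_s)_{xx}$ into $\{v_s,-v_s^{-1}\}$'' does not follow from $\tilde\iota(c_x)=c_x$ and $\iota(T_s)=T_s-(v_s-v_s^{-1})$. What that combination yields is only the \emph{functional equation} $\overline{\omega(T_s)_{xx}}=\omega(T_s)_{xx}-(v_s-v_s^{-1})$ for the diagonal entry, whose solution set is $v_s + (\text{any bar-invariant element})$, not the two-element set you need. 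Pinning the diagonal entries down requires input from the $T_s$-action beyond bar-invariance --- precisely the kind of structure the canonicalisation formalism does not see, since Theorem~\ref{theorem:canonicalisation} is a statement purely about $\widehat{\mathcal{A}}$-modules and knows nothing of $H$. Second, the step ``choose a basis $(t_x)$ and a partial order so that $(Kt_x)$ is an $X$-graded shadow'' quietly assumes what is to be shown: the existence of a shadow is, by Theorem~\ref{theorem:canonicalisation}, essentially equivalent to $V$ being a canonical module, and nothing in Tits' deformation theorem by itself produces a triangular basis of a bar-stable lattice. Third, rescaling $\tilde\iota$ to become involutive may involve a scalar of $K(v)$ that does not preserve the lattice $L$, so the claim that $L$ becomes an $\widehat{\mathcal{A}}$-module needs justification.

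Conceptually, the obstruction you ran into is not a technicality but structural: canonicalisation is a statement about one semilinear involution, whereas the $W$-graph axioms couple the descent labelling $I(x)$ to the $T_s$-action for every $s$ simultaneously. Your proposed fallback --- transport a $W$-graph structure along a subquotient of a Kazhdan--Lusztig cell module via the corollary to Theorem~\ref{theorem:canonicalisation} --- also does not close the gap as stated, since cell modules are generally not irreducible and the multiplicity of a given irreducible across cells must be controlled; this is one of the places where Gyoja's argument does genuine work. The honest reading is that you have reconstructed the easy half (a bar-invariant basis with triangular change-of-basis exists in favourable circumstances) but not the hard half, and you have over-claimed in the diagonal-entry step.
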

\begin{remark}
The same is true in the multi-parameter case if Lusztig's conjecture \textbf{P15} or similar properties like Geck and Jacon's $(\spadesuit)$ and $(\clubsuit)$ hold for $(W,S,L)$, see \citep[2.7.12]{geckjacon} or \citep[4.3.5]{hahn2013diss} for a proof.
\end{remark}
\begin{remark}
Gyoja also provides an example of a finite-dimensional representation of the affine Weyl group of type $\tilde{A}_n$ that is not induced by a $W$-graph.
\end{remark}

\begin{convention}
For the remainder of the paper we will assume $\Gamma=\IZ$ (although we will still write $\Gamma$ when referring to the group of exponents of the Laurent polynomials).

It is not strictly speaking necessary to do this since the results also hold in the general case, but the general definitions and proofs are much more technical because one has to work with infinite series of the form $\sum_{-L(s)<\gamma<L(s)} x_{s,\gamma}v^\gamma$ and must ensure their convergence in the appropriate sense in all proofs.

By restricting to $\Gamma=\IZ$ all the relevant sums become finite sums and separate convergence arguments are unnecessary.
\end{convention}

\begin{definition}[The $W$-graph algebra]
Assume $\Gamma=\IZ$ and consider the free algebra $\IZ\langle e_s, x_{s,\gamma} | s\in S, -L(s)<\gamma<L(s)\rangle$. Define
\[j(T_s) := -v_s^{-1} e_s + v_s (1-e_s) + \sum_{-L(s)<\gamma<L(s)} v^\gamma x_{s,\gamma}  \in \mathbb{Z}[\Gamma] \otimes_\IZ \IZ\langle e_s, x_{s,\gamma} \rangle\]
for all $s,t\in S$ and write
\[\sum_{\gamma\in\Gamma}  v^\gamma \otimes y^\gamma(s,t) = \underbrace{j(T_s)j(T_t)j(T_s)\ldots}_{m_{st}\,\text{factors}} - \underbrace{j(T_t)j(T_s)j(T_t)\ldots}_{m_{st}\,\text{factors}}\]
for some $y^\gamma(s,t)\in\IZ\langle e_s, x_{s,\gamma} \rangle$.

\medbreak
Define $\Omega$ to be the quotient of $\IZ\langle e_s, x_{s,\gamma} \rangle$ modulo the relations
\begin{enumerate}
	\item $e_s^2=e_s$, $e_s e_t = e_t e_s$,
	\item $e_s x_{s,\gamma}=x_{s,\gamma}$, $x_{s,\gamma} e_s = 0$,
	\item $x_{s,\gamma} = x_{s,-\gamma}$ and
	\item $y^\gamma(s,t) = 0$
\end{enumerate}
for all $s,t\in S$ and all $\gamma\in\Gamma$.

Finally define the element
\[x_s := \sum_{\gamma\in\Gamma} v^\gamma x_{s,\gamma} \in \IZ[\Gamma]\Omega.\]
\end{definition}

\begin{remark}
The definition immediately implies that $T_s\mapsto j(T_s)$ defines a homomorphism of $\IZ[\Gamma]$-algebras $j: H\to\IZ[\Gamma]\Omega$. In fact this is an embedding as shown in \citep[Corollary 10]{hahn2014wgraphs1}. We will identify $H$ with its image in $\IZ[\Gamma]\Omega$ and suppress any mention of $j$ from now on to simplify the notation.

Note that $C_s = T_s - v_s = -(v_s+v_s^{-1})e_s + x_s$.
\end{remark}

\begin{remark}\label{w_graph_algebra:H_vs_Omega_modules}
$W$-graph algebras have the distinguishing feature that each $W$-graph $(\mathfrak{C},I,m)$ with edge weights in $k$ not only defines the structure of an $H$-module on $k[\Gamma]^{\mathfrak{C}}$ but that it induces a canonical $k\Omega$-module structure on $k^\mathfrak{C}$ via
\[e_s \cdot \mathfrak{z} := \begin{cases} \mathfrak{z} & s\in I(\mathfrak{z}) \\ 0 & \text{otherwise}\end{cases}\]
\[x_s \cdot \mathfrak{z} := \sum_{\mathfrak{x}\in\mathfrak{C}} m_{\mathfrak{xz}}^s \mathfrak{x}\]
for all $\mathfrak{z}\in\mathfrak{C}$. Then $k[\Gamma]^{\mathfrak{C}\times\mathfrak{C}}$ is a $k[\Gamma]\Omega$-module and restriction to a $H$-module gives back the $H$-module in the definition.

Conversely if $V$ is a $k\Omega$-module that has a $k$-basis $\mathfrak{C}$ w.r.t. which all $e_s$ act as diagonal matrices, then $V$ is obtained from a $W$-graph $(\mathfrak{C},I,m)$ in this way: $m^s$ is the matrix representing the action of $x_s$ and $I(\mathfrak{z}) = \Set{s\in S | e_s \mathfrak{z}=\mathfrak{z}}$. In this way one can interpret $\Omega$-modules as $W$-graphs up to choice of a basis. (See \citep[Theorem 9]{hahn2014wgraphs1} or \citep[4.2.18]{hahn2013diss} for a detailed proof of these claims.)

Of course $V$ does not need to have a basis at all if $k$ is not a field so that $k\Omega$-modules are indeed more general than $W$-graphs.
\end{remark}

\begin{remark}
Every $k\Omega$-module $V$ is also a $k$-module and gives us a canonical $\widehat{k[\Gamma]}$-module $\widehat{V}=k[\Gamma]\otimes_k V$ which is also an $k[\Gamma]H$-module by restriction along $k[\Gamma]H\hookrightarrow k[\Gamma]\Omega$.

The results about Howlett-Yin induction will all be proved via canonicalisation of $k[\Gamma]H$-modules on which we will define the appropriate $k\Omega$-module structure.
\end{remark}

\begin{example}
The trivial group is a Coxeter group $(1,\emptyset)$ and its associated $W$-graph algebra is just $\IZ$.

A cyclic group of order 2 is a Coxeter group $(\set{1,s},\set{s})$ of rank 1 and its associated $W$-graph algebra is as a free $\IZ$-module with basis $\Set{e_s, 1-e_s}\cup\Set{x_{s,\gamma} | 0\leq\gamma<L(s)}$. The multiplication of the basis elements is completely determined by the relations because $x_{s,\gamma_1} x_{s,\gamma_2} = x_{s,\gamma_1} (e_s x_{s,\gamma_2}) = (x_{s,\gamma_1} e_s) x_{s,\gamma_2} = 0$.
\end{example}
\section{Howlett-Yin induction}\label{section:induction}

Let $M$ be any $k\Omega$-module. Then $k[\Gamma]M$ is naturally a $k[\Gamma]k\Omega$-module and by restriction of scalars it is also a $k[\Gamma]H$-module which we will (somewhat abusing the notation) denote by $\Res_H^\Omega M$.

Given any $\Omega_J$-module $M$, its restriction to $H_J$ can be induced to a $H$-module. The goal of this subsection is to prove that $\Ind_{H_J}^H \Res_{H_J}^{\Omega_J} M$ not only has the structure of an $\Omega$-module but that this module structure can be chosen functorially in $M$. The specific construction of this functor is a generalisation of such a construction Howlett and Yin gave in the equal-parameter case and for the special case that $M$ is given by $W$-graph. We will prove it in the general case using idea's by Lusztig (see \cite{lusztig2003hecke}).

\subsection{Preparations}

\begin{proposition}\label{eigenspace_Ts}
Let $M$ be a $k\Omega$-module and $a\in\Res_H^\Omega(M)$ be an arbitrary element. Then the following holds for all $s\in T$:
$T_s a = -v_s^{-1} a \iff e_s a = a$
\end{proposition}
\begin{proof}
The forward implication can be seen as follows
\begin{align*}
	-v_s^{-1} a &= (-v_s^{-1} e_s + v_s (1-e_s) +x_s) a \\
	\implies 0 &= ((v_s^{-1}+v_s)(1-e_s) + x_s) a \\
	\implies 0 &= ((v_s^{-1}+v_s)(1-e_s)^2 + \smash{\underbrace{(1-e_s)x_s}_{=0}}) a \\
	&= (v_s^{-1}+v_s)(1-e_s)a \\
	\implies 0 &= (1-e_s)a
\end{align*}
where we used in the last step that $\Res_H^\Omega(M) = k[\Gamma]\otimes_k M$ as $k[\Gamma]$-modules to cancel $v_s^{-1}+v_s$. The backward implication is trivial: $T_s a = T_s e_s a = -v_s^{-1} e_s^2 a = -v_s^{-1} a$.
\end{proof}

We will need the following well-known facts about cosets of parabolic subgroups:
\begin{lemmadef}\label{lemma:deodhar}
Let $J\subseteq S$ be any subset and $W_J$ the associated parabolic subgroup. Then the following hold:
\begin{enumerate}
	\item $D_J := \Set{x\in W | \forall s\in J : l(xs)>l(x)}$ is a set of representatives for the left cosets of $W_J$ in $W$. Its elements are exactly the unique elements of minimal length in each coset. They have the property $l(xw)=l(x)+l(w)$ for all $w\in W_J$.
	\item Deodhar's Lemma (c.f. \cite{deodhar1977bruhat})
	
	For all $w\in D_J$ and all $s\in S$ exactly one of the following cases happens:
	\begin{enumerate}
		\item $sw>w$ and $sw\in D_J$
		\item $sw>w$ and $sw\notin D_J$. In this case $s^w:=w^{-1}sw\in J$.
		\item $sw<w$. In this case $sw\in D_J$ holds automatically.
	\end{enumerate}
	Thus for fixed $s\in S$ there is a partition $D_J=D_{J,s}^+ \sqcup D_{J,s}^0 \sqcup D_{J,s}^-$ and similarly for fixed $w\in D_J$ there is a partition $S=D_J^+(w) \sqcup D_J^0(w) \sqcup D_J^-(w)$ where
	\begin{alignat*}{2}
	D_{J,s}^+ &:= \Set{w | sw>w, sw\in D_J}    & \quad D_{J}^+(w) &:= \Set{s | sw>w, sw\in D_J} \\
 	D_{J,s}^0 &:= \Set{w | sw>w, sw\notin D_J} & \quad D_{J}^0(w) &:= \Set{s | sw>w, sw\notin D_J} \\
	D_{J,s}^- &:= \Set{w | sw<w}               & \quad D_{J}^-(w) &:= \Set{s | sw<w}
	\end{alignat*}
\end{enumerate}
If $J\subseteq K\subseteq S$ is another subset, then furthermore
\begin{enumerate}[resume]
	\item $D_J^K:=D_J\cap W_K$ is the set of distinguished left coset representatives for $W_J$ in $W_K$ and $D_K^S \times D_J^K \to D_J^S, (x,y)\mapsto xy$ is a length-preserving bijection.
	\item If $x\in D_K^S$ and $y\in D_J^K$, then
	\begin{align*}
		D_J^+(xy) &= \Set{s\in D_K^0(x) | s^x \in D_J^+(y)} \cup  D_K^+(x) \\
		D_J^0(xy) &= \Set{s\in D_K^0(x) | s^x \in D_J^0(y)} \\
		D_J^+(xy) &= \Set{s\in D_K^0(x) | s^x \in D_J^-(y)} \cup D_K^-(x)
	\end{align*}
\end{enumerate}
\end{lemmadef}
\begin{proof}
See \cite{geckpfeiffer} for example.
\end{proof}

To apply the previous observations about canonical modules we need a semilinear map on our modules.
\begin{lemmadef}\label{lemmadef:iota_p}
If $M$ is any $k\Omega$-module, then we will fix the notation $\iota$ for the canonical $\overline{\phantom{m}}$-semilinear map $a\otimes m\mapsto \overline{a}\otimes m$ on $k[\Gamma]\otimes_k M$.

Then the following hold:
\begin{enumerate}
	\item In the special case of $M=k\Omega$ itself, $\iota$ is ring automorphism of $k[\Gamma]\Omega$ with $\iota(T_s) = T_s^{-1} = T_s - (v_s-v_s^{-1})$. In particular $\iota$ restricts to a $\overline{\phantom{m}}$-semilinear involution of $H$.
	\item For general $M$, furthermore $\iota(ax) = \iota(a)\iota(x)$ holds for all $a\in k[\Gamma]\Omega$, $x\in k[\Gamma]M$.
\end{enumerate}
Now let $M$ be a $k\Omega_J$-module and $V:=\Res_{H_J}^{\Omega_J}(M)$ its associated $k[\Gamma]H_J$-module.
\begin{enumerate}[resume]
	\item $\iota(h\otimes x):=\iota(h)\otimes\iota(x)$ is a well-defined $\overline{\phantom{m}}$-semilinear involution on $\Ind_{H_J}^H(V)=H \otimes_{H_J} V$.
	\item The $k$-submodules $V_w := \Set{T_w \otimes m | m\in M}\subseteq\Ind_{H_J}^H(V)$ for $w\in D_J$ constitute a $D_J$-graded shadow on $\Ind_{H_J}^H(V)$ where $D_J$ is endowed with the Bruhat-Chevalley-order.
	\item The maps $\rho_{xz}:V_z\to k[\Gamma]V_x$ and $\pi_{xz}:V_z\to k[\Gamma]V_x$ in Theorem \ref{canonicalisation_from_shadows} are of the form
	\[\rho_{xz}(T_z\otimes m) = T_x\otimes r_{x,z} m \quad\text{and}\quad \pi_{xz}(T_z\otimes m) = T_x\otimes p_{x,z} m\]
	for elements $r_{x,z} \in \IZ[\Gamma]\Omega_J$, $p_{x,z}\in \IZ[\Gamma_{\geq 0}]\Omega_J$ that are independent of $M$.
\end{enumerate}
\end{lemmadef}
\begin{proof}
a.+b. The statements follow directly from the definition.

\medbreak
c. For all $a\in H_J$ one has $\iota(ha)\otimes\iota(x) = \iota(h)\iota(a)\otimes\iota(x) = \iota(h)\otimes\iota(a)\iota(x) = \iota(h)\otimes\iota(ax)$ which proves the well-definedness of $h\otimes x \mapsto \iota(h)\otimes\iota(x)$.

\medbreak
d. Note that
\[\iota(T_z\otimes m) = \sum_{w\in W} R_{w,z} T_w\otimes m = \sum_{x\in D_J} T_x \otimes \underbrace{\sum_{y\in W_J} R_{xy,z} T_y}_{=:r_{x,z}} m\]
where $R_{w,z}\in\IZ[\Gamma]$ are the Kazhdan-Lusztig $R$-polynomials, i.e. the polynomials defined by $\iota(T_z) = \sum_{w\in W} R_{w,z} T_w$.

Now note that $R_{xy, z} \neq 0$ implies $xy\leq z$ so that $x\leq xy\leq z$ and thus the summation only runs over $x$ with $x\leq z$. If furthermore $x=z$ then $xy\leq z$ can only be true if $y=1$. But we know $R_{z,z} = 1$. Therefore $(V_w)$ really is a $D_J$-graded shadow of $V$.

\medbreak
e. The claimed property for $\rho_{xz}$ follows from the equation above. The analogous property for $\pi_{xz}$ follows from the recursive construction of the $\pi_{xz}$ in Theorem \ref{canonicalisation_from_shadows}.
\end{proof}

\begin{lemmadef}\label{lemma:def_mu}
There is a unique family $\mu_{x,z}^s \in \IZ[\Gamma]\Omega_J$ for $x,z\in D_J, s\in S$ such that the following properties hold
\begin{enumerate}
	\item $\mu_{x,z}^s=0$ unless $x<z$, $z\in D_{J,s}^+ \cup D_{J,s}^0$, and $x\in D_{J,s}^0 \cup D_{J,s}^-$ hold.
	\item $\overline{\mu_{x,z}^s} = \mu_{x,z}^s$
	\item If $z\in D_{J,s}^+ \cup D_{J,s}^0$ and $x\in D_{J,s}^0 \cup D_{J,s}^-$, then 
	\[\mu_{x,z}^s + R + \sum_{x<y<z} p_{x,y}\mu_{y,z}^s \in \IZ[\Gamma_{>0}]\Omega_J\]
	where
	\[R = \begin{cases}
		- C_{s^x} p_{x,z}                  & z\in D_{J,s}^+,\; x\in D_{J,s}^0 \\
		v_s^{-1} p_{x,z}                   & z\in D_{J,s}^+,\; x\in D_{J,s}^- \\
		p_{x,z} C_{s^z} - C_{s^x} p_{x,z}  & z\in D_{J,s}^0,\; x\in D_{J,s}^0 \\
		p_{x,z} C_{s^z} + v_s^{-1} p_{x,z} & z\in D_{J,s}^0,\; x\in D_{J,s}^-
		\end{cases}\]
\end{enumerate}
These elements satisfy:
\begin{enumerate}[resume]
	\item $v_s \mu_{x,z}^s \in \IZ[\Gamma_{>0}]\Omega_J$.
\end{enumerate}
\end{lemmadef}
\begin{proof}
Conditions a.), b.) and c.) are precisely designed to give a recursive definition of $\mu_{x,z}^s$: The recursion happens along the poset $\Set{(x,z)\in D_J\times D_J | x\leq z}$ with the order $(x,z)\sqsubset(x',z') :\iff z<z' \vee (z=z' \wedge x>x')$.

First note that this is a well-founded poset since intervals in the Bruhat-Chevalley order are finite so that no infinite descending chain can exists and recursive definitions really make sense.

Now if $\mu_{x,z}^s$ is known for all $(x,z)\sqsubset(x',z')$ then c. determines the nonpositive part of $\mu_{x',z'}^s$ and by the symmetry condition b. $\mu_{x',z'}^s$ is completely determined. This shows how to define $\mu_{x',z'}$ for $x'\leq z'$.

\medbreak
The last property of $\mu_{x,z}^s$ follows by induction from this recursive construction. Note that $v_s R \in\IZ[\Gamma_{>0}]\Omega_J$ in all four cases because $p_{x,z}\in\IZ[\Gamma_{>0}]\Omega_J$ for all $x<z$. Assuming that $v_s \mu_{y,z}^s\in\IZ[\Gamma_{>0}]\Omega_J$ already holds for all $(y,z)\sqsubset(x,z)$ we find that
\[v_s \mu_{x,z}^s \equiv -v_s R-\sum_{x<y<z} p_{x,y} v_s \mu_{y,z}^s \equiv 0 \mod\IZ[\Gamma_{>0}]\Omega_J\]
holds.
\end{proof}

\subsection{Induction is well-defined}

\newcommand{\HYI}{\operatorname{HY}}
\begin{theorem}\label{H_induction_as_Omega_module}
Let $M$ be a $k\Omega_J$-module and $\HYI_J^S(M)$ the $k$-module $\bigoplus_{w\in D_J} M$. Denote elements of the $w$-component of $\HYI_J^S(M)$ as $w|m$. Further write $\mu_{x,z}^s$ as $\sum_{-L(s)<\gamma<L(s)} \mu_{x,z}^{s,\gamma} \cdot v^\gamma$ with $\mu_{x,z}^{s,\gamma}\in\Omega_J$.

With this notation $\HYI_J^S(M)$ becomes a $k\Omega$-module via
\[e_s \cdot z|m := \begin{cases} 0 & z\in D_{J,s}^+ \\ z| e_{s^z} m & z\in D_{J,s}^0 \\ z|m & z\in D_{J,s}^- \end{cases}\]
\[x_{s,\gamma} \cdot z|m := \begin{cases}
	\sum_{x<z} x|\mu_{x,z}^{s,\gamma} m + sz|m & z\in D_{J,s}^+, \gamma=0 \\
	\sum_{x<z} x|\mu_{x,z}^{s,\gamma} m  & z\in D_{J,s}^+,\gamma\neq 0 \\
	\sum_{x<z} x|\mu_{x,z}^{s,\gamma} m + z|x_{s^z,\gamma} m & z\in D_{J,s}^0 \\ 
	0 & z\in D_{J,s}^-
	\end{cases}\]
and the canonicalisation $c_M: \Res_H^\Omega \HYI_J^S(M) \to \Ind_{H_J}^H \Res_{H_J}^{\Omega_J}(M), z|m \mapsto \sum_{y} T_y\otimes p_{y,z} m$ is $k[\Gamma]H$-linear.
\end{theorem}

\begin{remark}
Comparing with \citep[Theorem 5.3]{howlett2003inducingI}, this theorem gives a more general result. It includes \citep[Theorem 5.3]{howlett2003inducingI}, as we will see in Proposition \ref{recovering_howlett_yin_result}, but it also encompasses the multi-parameter case (which Geck considered in \cite{geck2003induction} for the special case that $M$ is a left cell module).

We will also see in Theorem \ref{functoriality_HYI} that the construction is functorial in the appropriate sense which is also not included in Howlett and Yin's theorem. This more general, more abstract way of looking at induction will allow us to prove transitivity which was not included in Howlett and Yin's paper. It also allows for simplification of many known results.
\end{remark}

\begin{proof}[Proof of the main theorem]
We want to define a representation $\omega: \Omega \to \End(\HYI_J^S(M))$ and we already have a definition of $\omega(e_s)$ and $\omega(x_{s,y})$. Extend these $k[\Gamma]$-linearly to
\[\omega:\textrm{span}_{k[\Gamma]}\Set{1,e_s,x_{s,\gamma} | s\in S, -L(s)<\gamma<L(s)} \to k[\Gamma]\End(\HYI_J^S(M))\]

We need to show that $\omega$ satisfies all the relations of $\Omega$, that is
\begin{enumerate}
	\item $\omega(e_s)^2=\omega(e_s),\quad \omega(e_s)\omega(e_t) = \omega(e_t)\omega(e_s)$,
	\item $\omega(e_s)\omega(x_{s,\gamma}) = \omega(x_{s,\gamma}),\quad \omega(x_{s,\gamma})\omega(e_s) = 0$,
	\item $\omega(x_{s,\gamma}) = \omega(x_{s,-\gamma})$ and
	\item $\underbrace{\omega(T_s)\omega(T_t)\omega(T_s) \ldots}_{m_{st}\,\text{factors}} = \underbrace{\omega(T_t)\omega(T_s)\omega(T_t) \ldots}_{m_{st}\,\text{factors}}$ as an equation in $k[\Gamma] \End(\HYI_J (M))$ where $m_{st}:=\ord(st)$.
\end{enumerate}

\medbreak
The equations $\omega(e_s)^2 = \omega(e_s)$, $\omega(e_s) \omega(e_t) = \omega(e_t) \omega(e_s)$ and $\omega(x_{s,\gamma})=\omega(x_{s,-\gamma})$ follow directly from the definitions and the properties of $\mu$.

\medbreak
To prove that $\omega(T_s)$ satisfies the braid relations, we use the $k[\Gamma]$-linear bijection $c$ and show $c(\omega(T_s)z|m) = T_s c(z|m)$ for all $z\in D_J$ and all $m\in M$. Since $\Ind_{H_J}^H \Res_{H_J}^{\Omega_J}(M)$ is a $k[\Gamma]H$-module, the braid relations hold on the right hand side and will therefore also hold on the left hand side. Because of the equality $C_s=T_s-v_s$ this is equivalent to showing $c(\omega(C_s)w|m) = C_s c(w|m)$. We compare these two elements of $\Ind_{H_J}^H \Res_{H_J}^{\Omega_J} M$:

One the left hand side we find:
\begin{align*}
	c(\omega(C_s)z|m) &= c((-(v_s+v_s^{-1})e_s + x_s)\cdot z|m) \\
	&=\begin{cases}
		c(sz|m + \sum\limits_{y<z} y|\mu_{y,z}^s m) & z\in D_{J,s}^+ \\
		c(z|(-(v_s + v_s^{-1}) e_{s^z} + x_{s^z})m + \sum\limits_{y<z} y|\mu_{y,z}^s m) & z\in D_{J,s}^0 \\
		-(v_s^{-1}+v_s) c(z|m) & z\in D_{J,s}^-
	\end{cases} \\
	&=\begin{cases}
		c(v_s z|m + sz|m + \sum\limits_{y<z} y|\mu_{y,z}^s m) & z\in D_{J,s}^+ \\
		c(z|C_{s^z} m + \sum\limits_{y<z} y|\mu_{y,z}^s m) & z\in D_{J,s}^0 \\
		- (v_s+v_s^{-1}) c(z|m) & z\in D_{J,s}^-
	\end{cases} \\
	&= \begin{cases}
		\sum\limits_{x\in D_{J,s}} T_x \otimes \left(p_{x,sz}m + \sum\limits_{x\leq y<z} p_{x,y}\mu_{y,z}^s m\right) & z\in D_{J,s}^+ \\
		\sum\limits_{x\in D_{J,s}} T_x \otimes \left(p_{x,z}C_{s^z} m + \sum\limits_{x\leq y<z} p_{x,y}\mu_{y,z}^s m\right) & z\in D_{J,s}^0 \\
		\sum\limits_{x\in D_{J,s}} T_x \otimes (-v_s-v_s^{-1}) p_{x,z}m & z\in D_{J,s}^-
	\end{cases}
\end{align*}
On right hand side we find:
\begin{align*}
	C_s c(z|m) &= \sum_{x\in D_{J,s}} T_s T_x\otimes p_{x,z}m + T_x\otimes (-v_s) p_{x,z}m \\
	&= \sum_{x\in D_{J,s}^+} T_{sx} \otimes p_{x,z}m + T_x\otimes (-v_s) p_{x,z}m \\
	&\quad + \sum_{x\in D_{J,s}^0} T_x\otimes T_{s^x}p_{x,z}m + T_x\otimes (-v_s) p_{x,z}m \\
	&\quad + \sum_{x\in D_{J,s}^-} (T_{sx}+(v_s-v_s^{-1})T_x)\otimes p_{x,z}m + T_x\otimes (-v_s) p_{x,z}m \\
	&= \sum_{x\in D_{J,s}^-} T_{x} \otimes p_{sx,z}m + \sum_{x\in D_{J,s}^+} T_x\otimes(-v_s) p_{x,z}m \\
	&\quad + \sum_{x\in D_{J,s}^0} T_x\otimes (T_{s^x}-v_s)p_{x,z}m \\
	&\quad + \sum_{x\in D_{J,s}^+} T_x\otimes p_{sx,z}m + \sum_{x\in D_{J,s}^-} T_x\otimes(-v_s^{-1})p_{x,z}m \\
	&= \sum_{x\in D_{J,s}^+} T_x\otimes (p_{sx,z}-v_s p_{x,z})m \\
	&\quad + \sum_{x\in D_{J,s}^0} T_x\otimes C_{s^x}p_{x,z}m \\
	&\quad + \sum_{x\in D_{J,s}^-} T_{x} \otimes (p_{sx,z}-v_s^{-1}p_{x,z})m
\end{align*}
Comparing the $T_x\otimes M$ components we find an equation of elements of $\IZ[\Gamma]\Omega_J$ that needs to be satisfied. More specifically it is the equation in \ref{lemma:formula_p_mu} below.

\medbreak
Similarly the equations
\[\omega(e_s)\omega(x_s) = \omega(x_s) \quad\text{and}\quad \omega(x_s)\omega(e_s) = 0\]
translate into equations of elements of $\IZ[\Gamma]\Omega_J$, the two equations in \ref{lemma:emu_mu} and \ref{lemma:mue_zero} below.

%
\begin{lemma}\label{main_theorem_formulas}
The elements $p_{x,z}$, $\mu_{x,z}^s$ of $\IZ[\Gamma]\Omega_J$ satisfy the following equations:
\begin{enumerate}[label=\alph*.),ref=part \alph*. of Lemma \thetheoremnumber]
\item \label{lemma:formula_p_mu}
For all $z\in D_J$ and all $x\in D_J$
\[\hspace{-0.5em}\left.\begin{array}{lr}
	x\in D_{J,s}^+ & p_{sx,z} - v_s p_{x,z} \\
	x\in D_{J,s}^0 & C_{s^x} p_{x,z} \\
	x\in D_{J,s}^- & p_{sx,z} -v_s^{-1} p_{x,z}	
\end{array}\right\rbrace = \left\lbrace\begin{array}{ll}
	p_{x,sz} + \sum\limits_{x\leq y<z} p_{x,y}\mu_{y,z}^s & z\in D_{J,s}^+ \\
	p_{x,z} C_{s^z} + \sum\limits_{x\leq y<z} p_{x,y}\mu_{y,z}^s & z\in D_{J,s}^0 \\
	-(v_s+v_s^{-1})p_{x,z} & z\in D_{J,s}^-
\end{array}\right.\]
\item \label{lemma:pe}
For all $z\in D_{J,s}^0$ and all $x\in D_J$:
\[p_{x,z} e_{s^z} = \begin{cases}
	-v_s p_{sx,z} e_{s^z} & x\in D_{J,s}^+ \\
	e_{s^x} p_{x,z} e_{s^z} & x\in D_{J,s}^0 \\
	-v_s^{-1} p_{sx,z} e_{s^z} & x\in D_{J,s}^-
	\end{cases}\]
\item \label{lemma:emu_mu}
For all $z\in D_J$ and all $x\in D_{J,s}^0$: $e_{s^x} \mu_{x,z}^s = \mu_{x,z}^s$
\item \label{lemma:mue_zero}
For all $z\in D_{J,s}^0$ and all $x\in D_J$: $\mu_{x,z}^s e_{s^z} = 0$
\end{enumerate}
\end{lemma}

The proof of this lemma can be found in the appendix of this paper. It is inspired by Lusztig's proof of the analogous equations in \cite{lusztig1990canonical}, but is significantly longer.
\end{proof}

\subsection{First applications}

\subsubsection{Recovering well-known examples of induced modules}

We will start off by proving that our result encompasses several classic $W$-graph existence results, including Howlett and Yin's \citep[Theorem 5.1]{howlett2003inducingI}.

\begin{example}
Starting with $J=\emptyset$ and the regular module $\Omega_\emptyset=\IZ$ we obtain the special case $\HYI_\emptyset^S(\IZ) =: KL^S$. As an $H$-module this is isomorphic to $\Ind_{H_\emptyset}^H(H_\emptyset) = H$ and the basis $\Set{z\vert 1 | z\in W}$ is identified with the Kazhdan-Lusztig basis $\Set{C_z | z\in W}$ via the canonicalisation map.

Thus we recover Kazhdan and Lusztig's result (c.f. \citep[1.3]{KL}) that the regular $H$-module is induced by a $W$-graph. The elements $\mu_{x,y}^s\in \IZ[\Gamma]\Omega_\emptyset = \IZ[\Gamma]$ equal the $\mu$-values defined in \cite{KL} and \cite{lusztig2003hecke} (in the case of unequal parameters) up to a sign. The elements $p_{x,y}\in\IZ[\Gamma]$ are related to the Kazhdan-Lusztig polynomials via 
\[p_{x,y} = (-1)^{l(x)+l(y)} v^{L(x)-L(y)} \overline{P_{x,y}}\]
\end{example}

\begin{example}
Starting with an arbitrary $J\subseteq S$ and and arbitrary one-dimensional $\Omega_J$-module\footnote{Remember that all one-dimensional $H_J$-modules are given by a unique $W_J$-graph so that there is absolutely no difference between $H_J$- and $\Omega_J$-modules in this case} $M$, one obtains a $W$-graph structure on the induced module $\Ind_{H_J}^H(M)$. 

This module is called $M^J$ by Doedhar in \cite{Deodhar1987parabolic}. The elements $p_{x,y}\in\IZ[\Gamma]\Omega_J$ act on $M$ by multiplication with polynomials which are related to Deodhar's and Couillens's (c.f. \cite{couillens1999generalisation}) parabolic Kazhdan-Lusztig polynomials $P_{x,y}^J$ in a similar way as the polynomials in the previous example are related to the absolute Kazhdan-Lusztig polynomials.
\end{example}

We now show that Howlett-Yin induction is appropriately named, i.e. that it really recovers the construction in \cite{howlett2003inducingI}.
\begin{proposition}\label{recovering_howlett_yin_result}
Assume $L(s)=1$ for all $s\in S$.

\medbreak
Let $J\subseteq S$ be arbitrary and $M$ a $k\Omega_J$-module with a $k$-basis $\mathfrak{C}\subseteq M$ w.r.t. which $e_s$ acts diagonally (i.e. a module given by a $W$-graph) for all $s\in J$. Let $c: \Res_H^\Omega(\HYI_J^S(M)) \to \Ind_{H_J}^H(\Res_{H_J}^{\Omega_J}(M))$ be the canonicalisation isomorphism.

Then $c(x\vert\mathfrak{x})=C_{x,\mathfrak{x}}$ for all $x\in D_J, \mathfrak{x}\in\mathfrak{C}$ where $C_{x,\mathfrak{x}}$ denotes the canonical basis defined by Howlett and Yin in \citep[Theorem 5.1]{howlett2003inducingI}. The $W$-graphs given by the basis $\set{x\vert\mathfrak{x} | x\in D_J, \mathfrak{x}\in\mathfrak{C}}$ and by the basis $\set{C_{x,\mathfrak{x}} | x\in D_J, \mathfrak{x}\in\mathfrak{C}}$ are the same.
\end{proposition}
\begin{proof}
Our involution $\iota$ on $V:=H\otimes_{H_J} k[\Gamma]M$ is the same as Howlett and Yin's involution $\overline{\phantom{m}}$ defined in the introductory paragraph of \citep[Section 3]{howlett2003inducingI}.

Let $c: k[\Gamma]\HYI_J^S(M)\to V$ be the canonicalisation isomorphism and $\tilde{C}_{x,\mathfrak{x}}:=c(x|\mathfrak{x})$. Then $\Set{\tilde{C}_{x,\mathfrak{x}} | x\in D_J, \mathfrak{x}\in\mathfrak{C}}$ is a $k[\Gamma]$-basis of $V$ that satisfies $\overline{\tilde{C}_{x,\mathfrak{x}}} = \tilde{C}_{x,\mathfrak{x}}$ as well as $\tilde{C}_{x,\mathfrak{x}} \in T_x\otimes\mathfrak{x} + \sum_{w<x} T_w\otimes k[\Gamma_{>0}] M$. Howlett and Yin's theorem (as well as Theorem \ref{canonicalisation_from_shadows}) shows that there is a \emph{unique} basis with this property. Therefore $\tilde{C}_{x,\mathfrak{x}} = C_{x,\mathfrak{x}}$ as claimed.

That the $W$-graphs are identical follows from the fact that the $e_s$ act identical on both bases. In Theorem \ref{H_induction_as_Omega_module} we have chosen our definition such that
\begin{align*}
e_s \cdot x|\mathfrak{x} = x|\mathfrak{x} &\iff x\in D_{J,s}^- \vee (x\in D_{J,s}^0 \wedge e_{s^x}\mathfrak{x}=\mathfrak{x}) \\
&\iff (sx<x) \vee (sx>x \wedge sx\notin D_J \wedge s^x\in I(\mathfrak{x}))
\end{align*}
In \citep[Theorem 5.3]{howlett2003inducingI} the $W$-graph structure on $V$ is defined in such a way that (using Howlett and Yin's notation $\Lambda_s^-$)
\begin{align*}
e_s\cdot C_{x,\mathfrak{x}} = C_{x,\mathfrak{x}} &\iff (x,\mathfrak{x})\in\Lambda_s^- \\
&\iff (sx<x) \vee (sx>x \wedge sx\notin D_J \wedge s^x\in I(\mathfrak{x}))
\end{align*}
Because the canonicalisation map $c$ is $H$-linear, $H$ acts identical on both basis too. $x_s$ is a linear combination of $e_s$ and $T_s$ so that $x_s$ acts identical on both bases too, i.e. the edge weight matrices are also identical which proves that the two $W$-graphs are identical.
\end{proof}

\subsubsection{An algorithm to compute \texorpdfstring{$p$}{p} and \texorpdfstring{$\mu$}{µ}}

Note that \ref{lemma:formula_p_mu} and the recursive definition of the $\mu$ lead to the following recursive algorithm to compute $p_{x,z}$ and $\mu_{x,z}^s$ for all $x,z\in D_J$ and all $s\in S$.

The recursion is again along the (well-founded!) order $(x,z)\sqsubset(x',z') :\iff z<z' \vee (z=z' \wedge x>x')$ on $\Set{(x,z)\in D_J\times D_J | x\leq z}$.

\begin{algorithm}\label{algorithm:p_mu}
Input: $J\subseteq S$ and $x,z\in W$.

Output: $p_{x,z}\in \IZ[\Gamma]\Omega_J$ and $\mu_{x,z}^s \in \IZ[\Gamma]\Omega_J$ for all $s\in S$.

\begin{enumerate}[label=\arabic*.]
	\item If $x\not\leq z$, then $p_{x,z}=0$ and $\mu_{x,z}^s=0$.
	\item If $x=z$, then $p_{x,z}=1$ and $\mu_{x,z}^s=0$.
	\item If $x<z$, then choose any $t\in S$ with $tz<z$ and consider the following cases:
	\begin{enumerate}[label=\arabic{enumi}.\arabic*.]
		\item If $t\in D_J^+(x)$, then $p_{x,z} = -v_t p_{tx,z}$.
		\item If $t\in D_J^0(x)$, then $p_{x,z} = C_{t^x} p_{x,tz} - \sum_{y<tz} p_{x,y} \mu_{y,tz}^t$
		\item If $t\in D_J^-(x)$, then $p_{x,z} = p_{tx,tz} - v_t^{-1} p_{x,tz} - \sum_{y<tz} p_{x,y} \mu_{y,tz}^t$
	\end{enumerate}
	\item For all $s\in S$:
	\begin{enumerate}
		\item If $s\in D_J^+(x)$ or $s\in D_J^-(z)$, then $\mu_{x,z}^s = 0$.
		\item Otherwise compute $\alpha := -R-\sum_{x<y<z} p_{x,y} \mu_{y,z}^s$, where $R$ is defined as in \ref{lemma:def_mu}. Write $\alpha=\alpha_{-} + \alpha_0 + \alpha_{+}$ where $\alpha_{-}\in\IZ[\Gamma_{<0}]\Omega_J, \alpha_0\in\Omega_J, \alpha_{+} \in\IZ[\Gamma_{>0}]\Omega_J$. Then $\mu_{x,z}^s=\alpha_{-} + \alpha_0 + \overline{\alpha_{-}}$.
	\end{enumerate}
\end{enumerate}
\end{algorithm}

\subsubsection{More about the algebraic structure of \texorpdfstring{$\Omega$}{the W-graph algebra}}

The fact that $m\mapsto 1|m$ is an injective map $M\to\HYI_J^S(M)$ provides a simple proof to \citep[Conjecture 4.2.23]{hahn2013diss} from the author's thesis.

\begin{proposition}\label{corollary:parabolic_embedding}
Let $k$ be a commutative ring. Then the parabolic morphism $j: k\Omega_J\to k\Omega, e_s\mapsto e_s, x_{s,\gamma}\mapsto x_{s,\gamma}$ is injective.
\end{proposition}
\begin{proof}
Consider the Howlett-Yin induction $M:=\HYI_J^S(\Omega_J)$ of the regular $k\Omega_J$-module. It is a $k\Omega$-module so that $f: k\Omega \to M, a\mapsto a\cdot 1|1$ is a morphism of $k\Omega$-left-modules. For all $s\in J$ one finds $f(j(e_s)) = 1|e_s$ and $f(j(x_{s,\gamma})) = 1|x_{s,\gamma}$ so that $f(j(a))=1|a$ holds for all $a\in k\Omega_J$. In particular we find that $f\circ j$ is injective so that $j$ is injective.
\end{proof}
We will therefore suppress the embedding altogether and consider $k\Omega_J$ as a true subalgebra of $k\Omega$ from now on.

The Howlett-Yin induction also provides the correction to a small error in the proof of \citep[Corollary 4.2.19]{hahn2013diss} in the author's thesis.
\begin{proposition}
Let $k$ be a commutative ring. Define $E_J:=\prod_{s\in J} e_s \prod_{s\in S\setminus J} (1-e_s)\in k\Omega_J$. This element is non-zero in $k\Omega_J$.
\end{proposition}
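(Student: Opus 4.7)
The plan is to exhibit a concrete $k\Omega$-module on which $E_J$ acts nontrivially; this immediately yields $E_J \neq 0$ in $k\Omega$, and hence in $k\Omega_J$ under the embedding of Corollary \ref{corollary:parabolic_embedding}. The natural candidate is the Howlett--Yin induction $V := \HYI_J^S(k_{\mathrm{sgn}})$, where $k_{\mathrm{sgn}}$ is a suitable one-dimensional $k\Omega_J$-module designed so that $\prod_{s \in J} e_s$ acts as the identity on it.

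First I would construct $k_{\mathrm{sgn}}$ as the free $k$-module of rank one on which $e_s$ acts as $1$ and every $x_{s,\gamma}$ acts as $0$, for all $s \in J$ and all $\gamma$. All defining relations of $\Omega_J$ hold trivially on $k_{\mathrm{sgn}}$ except the braid relations $y^\gamma(s,t) = 0$. On $k_{\mathrm{sgn}}$ the element $\iota(T_s)$ acts as the scalar $-v_s^{-1}$, so the braid identity on $\iota(T_s), \iota(T_t)$ reduces to an equality of scalar products, and a short parity argument shows this holds provided $v_s = v_t$ whenever $m_{st}$ is odd. This last condition is automatic: for $s,t \in J$ with odd $m_{st}$ the simple reflections $s$ and $t$ are conjugate in $W_J$, so the weight function forces $L(s) = L(t)$, hence $v_s = v_t$.

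With $k_{\mathrm{sgn}}$ in hand I would compute $E_J \cdot (1|1) \in V$ using the explicit formulas of Theorem \ref{theorem:H_induction_as_Omega_module}. The case analysis for $1 \in D_J$ with respect to each $s \in S$ runs as follows. For $s \in J$ one has $s \cdot 1 = s \notin D_J$ (since $l(ss) = 0 < l(s)$), so $1 \in D_{J,s}^0$ with $s^1 = s$; plugging in gives $e_s \cdot (1|1) = 1|(e_s \cdot 1) = 1|1$. For $s \in S \setminus J$ one has $s \in D_J$ (since for every $t \in J$ one has $s \neq t$, hence $l(st) = 2 > l(s)$), so $1 \in D_{J,s}^+$, which gives $e_s \cdot (1|1) = 0$ and therefore $(1-e_s) \cdot (1|1) = 1|1$. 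Since all $e_s$ commute, iterating these identities yields $E_J \cdot (1|1) = 1|1 \neq 0$, so $E_J$ is nonzero as claimed.

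The only step requiring any real care is the braid-relation check for $k_{\mathrm{sgn}}$; the rest of the argument is a direct unwinding of the action of $\Omega$ on $\HYI_J^S(k_{\mathrm{sgn}})$ given by Theorem \ref{theorem:H_induction_as_Omega_module}, together with the trivial observation that $1 \in D_J$ falls into the case $D_{J,s}^0$ for $s \in J$ and into $D_{J,s}^+$ for $s \notin J$.
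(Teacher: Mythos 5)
Your proposal is correct and takes essentially the same approach as the paper: apply Howlett--Yin induction to the one-dimensional sign representation of $k\Omega_J$ and observe that $E_J$ fixes the element $1|m_0$. The only difference is that you carefully verify that the sign module is a well-defined $k\Omega_J$-module (in particular the braid relations, via the weight-function argument for odd $m_{st}$), a detail the paper takes for granted.
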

The fallacious argument in my thesis considered the Kazhdan-Lusztig $W$-Graph $KL^S$ and assuming falsely that each $J\subseteq S$ occurs as a left descent set $D_L(w)$ for some $w\in W$ I concluded that $E_J$ must act non-trivially on this $W$-graph. This only works for finite Coxeter groups because a subset $J\subseteq S$ in fact occurs as a left descent set if and only if $W_J$ is finite (c.f. \citep[Lemma 3.6]{Deodhar1987parabolic}). In particular $S$ itself does not occur as a left descent set if $W$ is infinite. Nevertheless $S$ occurs in the $W$-graph of the sign representation and $E_S\in k\Omega$ is therefore non-zero. This is the idea of the following proof:
\begin{proof}
Consider the sign representation $M=k\cdot m_0$ of $k\Omega_J$, i.e. $e_s m_0 = m_0$, $x_s m_0 = 0$ for all $s\in J$. The element $1|m_0\in\HYI_J^S(M)$ satisfies:
\[e_s 1|m_0 = \begin{cases} 1|m_0 & s\in J \\ 0 & s\notin J\end{cases}\]
for all $s\in S$ so that $E_J\cdot 1|m_0 = 1|m_0$ and therefore $E_J\neq 0$.
\end{proof}

\section{Categorial properties of Howlett-Yin induction}\label{section:categorial_properties}

We now prove that Howlett-Yin induction is very well-behaved. It in particular it is a functor between module categories, given by tensoring with a certain bimodule, satisfies a transitivity property and a Mackey-type theorem.

\subsection{Howlett-Yin induction as a functor between module categories}

\begin{theorem}\label{functoriality_HYI}
Let $M, M_1, M_2$ be $k\Omega_J$-modules,and $\phi: M_1\to M_2$ a $k\Omega_J$-linear map.
\begin{enumerate}
	\item Using the notation from the Theorem \ref{H_induction_as_Omega_module}, the map
	\[\HYI_J^S(\phi): \HYI_J^S(M_1)\to \HYI_J^S(M_2), z|m_1 \mapsto z|\phi(m_1)\]
	is $\Omega$-linear. In particular $\HYI_J^S$ is a functor $k\Omega_J\textbf{-Mod}\to k\Omega\textbf{-Mod}$.
	
	\item $\HYI_J^S(\phi)$ commutes with the two canonicalisations, that is the diagram in Figure \ref{fig:H_vs_Omega_induction} commutes.

	\begin{figure}[ht]
	\centering
	\begin{tikzpicture}
		\node (M1) at (0,2) {$\Res_H^\Omega \HYI_J^S(M_1)$};
		\node (M2) at (5,2) {$\Res_H^\Omega \HYI_J^S(M_2)$};
		\node (V1) at (0,0) {$\Ind_{H_J}^H \Res_{H_J}^{\Omega_J} M_1$};
		\node (V2) at (5,0) {$\Ind_{H_J}^H \Res_{H_J}^{\Omega_J} M_2$};
		
		\path[->,font=\scriptsize]
			(M1) edge node[above]{$\id\otimes \HYI_J^S(\phi)$} (M2)
			(V1) edge node[below]{$\Ind_{H_J}^H \phi$} (V2)
			(M1) edge node[left]{$c_{M_1}$} (V1)
			(M2) edge node[left]{$c_{M_2}$} (V2);
	\end{tikzpicture}
	\caption{Functoriality of Howlett-Yin induction}%
	\label{fig:H_vs_Omega_induction}%
	\end{figure}
	
	In other words: The canonicalisation $c$ is a natural isomorphism $\Res_H^\Omega \circ \HYI_J^S \to \Ind_{H_J}^H\circ\Res_{H_J}^{\Omega_J}$.
\end{enumerate}
\end{theorem}
\begin{proof}
That $\HYI_J^S(\phi)$ is $\Omega$-linear is readily verified with the definition of the $\Omega$-action.

\medbreak
$\Ind_{H_J}^H(\phi)$ is certainly $H$-linear. Therefore $\phi(T_x \otimes M_1)=T_x\otimes\phi(M_1)\subseteq T_x\otimes M_2$ holds for all $x\in D_J$. It also commutes with $\iota$. By functoriality of canonicalisation, the diagram in Figure \ref{fig:H_vs_Omega_induction} commutes.
\end{proof}

\begin{remark}
The ordinary induction functor $\Ind_{\Omega_J}^\Omega$ is given by a tensoring with the $\Omega$-$\Omega_J$-bimodule $\Omega$. It is therefore natural to ask whether $\HYI_J^S$ can be described as a tensor functor and whether it satisfies a property similar to the Hom-tensor adjunction. The next proposition answers these questions in the affirmative.
\end{remark}
 
\begin{proposition}
The functor $\HYI_J^S$ is exact, commutes with direct sums and satisfies
\begin{enumerate}
	\item There is a sub-$\Omega$-$\Omega_J$-bimodule $\mathfrak{I}_J^S\leq\Omega$ such that $\overline{a}\otimes m\mapsto a\cdot 1|m$ is a natural isomorphism $\Omega/\mathfrak{I}_J^S \otimes_{\Omega_J} M \to \HYI_J^S(M)$.
	\item $\HYI_J^S(M)$ has the following universal mapping property in $k\Omega\textbf{-mod}$:
	\[\Hom(\HYI_J^S(M), X) \isomorphic \Set{f: M\to\Res_{\Omega_J}^\Omega(X) | \mathfrak{I}_J^S \cdot f(M) = 0}\]
	where the isomorphism is given by $F\mapsto (m\mapsto F(1|m))$.
\end{enumerate}
\end{proposition}
\begin{proof}
It is clear from the definition that $\HYI_J^S$ is exact and commutes with direct sums.

The Eilenberg-Watts-theorem (which characterizes cocontinuous and (right)exact functors between module categories, c.f. \cite{eilenberg1960functors}, \cite{watts1960functors}) implies $\HYI_J^S \isomorphic Q\otimes_{\Omega_J} -$ for some $\Omega$-$\Omega_J$-bimodule $Q$. In fact the proof is constructive. It shows that one can choose $Q$ to be $\HYI_J^S(\Omega_J)$ and the isomorphism as $\HYI_J^S(\Omega_J)\otimes M\to \HYI_J^S(M),z|a\otimes m\mapsto z|am$.

Furthermore $\HYI_J^S(\Omega_J)$ is generated by the element $1|1$: The $\Omega_J$-submodule generated by $1|1$ is $1|\Omega_J$ and in general $1|M$ generates $\HYI_J^S(M)$ as an $\Omega$-module. Therefore $\HYI_J^S(\Omega_J)$ is isomorphic to some quotient $\Omega/\mathfrak{I}_J^S$ via $a+\mathfrak{I}_J^S \mapsto a\cdot 1|1$.

\medbreak
The universal property follows from this presentation of the functor: Tensor the exact sequence $\mathfrak{I}_J^S \to \Omega \to \Omega/\mathfrak{I}_J^S \to 0$ with $M$. Right exactness of $-\otimes M$ implies that
\[\mathfrak{I}_J^S \otimes_{\Omega_J} M \to \Omega \otimes_{\Omega_J} M \to \HYI_J^S(M)\to 0\]
is exact. This provides a universal property of $\HYI_J^S(M)$ as the quotient of $\Omega \otimes M$ modulo the image of $\mathfrak{I}_J^S\otimes M \to \Omega\otimes M$. Combining this with Hom-tensor-adjunction $\Hom(\Omega\otimes M,X) \isomorphic \Hom(M,\Res_{\Omega_J}^\Omega(X))$ we obtain the result.
\end{proof}

\subsection{Transitivity}

\begin{remark}
Having a concept of \enquote{induction} directly leads to ask additional questions such as whether this is a transitive construction. Howlett and Yin did not address this question in their original papers. If I were to guess I'd say that the proliferation of indices and combinatorial formulas made such a proof infeasible. Here our more abstract approach to induction pays off by encapsulating all the work with recursive formulas involving $p$ and $\mu$.
\end{remark} 

\begin{lemma}\label{H_and_Omega_linearity}
Let $V_1,V_2$ be two $k\Omega$-modules and $f: V_1 \to V_2$ a $k$-linear map.

Then $f$ is $k\Omega$-linear if and only if the induced map $k[\Gamma]\otimes_k V_1 \to k[\Gamma]\otimes_k V_2$ is $H$-linear and $f(e_s m)=e_s f(m)$ holds for all $m\in V_1$.
\end{lemma}
\begin{proof}
Because $T_s = -v_s^{-1} e_s + v_s e_s + x_s$ the assumptions imply $f(x_s m) = x_s f(m)$ as elements of $k[\Gamma]\otimes V_2=\bigoplus_\gamma v^\gamma V_2$. Now by definition $x_s=\sum_{\gamma} x_{s,\gamma} v^\gamma$ so that $\sum_\gamma f(x_{s,\gamma} m) v^\gamma = \sum_\gamma x_{s,\gamma} f(m) v^\gamma$. Comparing coefficients gives $\Omega$-linearity. The reverse implication is clear.
\end{proof}

\begin{theorem}\label{transitivity}
Howlett-Yin-Induction is transitive. More precisely: If $J\subseteq K\subseteq S$, then
\[\tau_M:\HYI_K^S(\HYI_J^K(M)) \to \HYI_J^S(M), w|z|m \mapsto wz|m\]
is a natural $k\Omega$-module isomorphism.
\end{theorem}

\begin{proof}
\newlength{\lenarrow}
\settowidth{\lenarrow}{\scriptsize$\Ind_{H_K}^{H_S}(c_M)$}

Consider the diagram in Figure \ref{fig:transitivity_hyi}. Here $t: \Ind_{H_K}^{H_S} \circ \Ind_{H_J}^{H_K} \to \Ind_{H_J}^{H_S}$ is the natural isomorphism mapping $h_1\otimes(h_2\otimes m) \mapsto h_1 h_2 \otimes m$.

\begin{figure}[ht]
\centering
\begin{tikzpicture}
	\node (HYHYM) at (0,4) {$\Res_{H_S}^{\Omega_S} \HYI_K^S \HYI_J^K M$};
	\node (IndHYM) at (0,2) {$\Ind_{H_K}^{H_S} \Res_{H_K}^{\Omega_K} \HYI_J^K M$};
	\node (IndIndV) at (0,0) {$\Ind_{H_K}^{H_S} \Ind_{H_J}^{H_K} \Res_{H_J}^{\Omega_J} M $};
	\node (IndV) at (5,0) {$\Ind_{H_J}^{H_S} \Res_{H_J}^{\Omega_J} M$};
	\node (HYM) at (5,4) {$\Res_{H_S}^{\Omega_S} \HYI_J^S M$};
	
	\path[->,font=\scriptsize]
		(HYHYM) edge node[above]{$\id_{k[\Gamma]}\otimes\tau_M$} (HYM)
		(IndIndV) edge node[below]{$t_M$} (IndV)
		(HYM) edge node[right]{$c_M$} (IndV)
		(HYHYM) edge node[left]{$c_{\HYI_J^K(M)}$} (IndHYM)
		(IndHYM) edge node[left]{$Ind_{H_K}^{H_S}(c_M)$} (IndIndV);
\end{tikzpicture}
\caption{Transitivity of Howlett-Yin-Induction}%
\label{fig:transitivity_hyi}%
\end{figure}

We will show that this diagram commutes. Note that $t$ and $c$ are natural $H$-linear isomorphisms. In particular this expresses $\id_{k[\Gamma]}\otimes\tau_M$ as a composition of $H_S$-linear natural isomorphisms. We will easily verify that $\tau_M$ is in fact $\Omega_S$-linear so that $\tau_M$ really is a natural isomorphism between $\Omega_S$-modules.

\medbreak
To prove the diagram commutes we will show that the counter-clockwise composition of arrows from $\Res_{H_S}^{\Omega_S} \HYI_J^S M$ to $\Ind_{H_J}^{H_S} \Res_{H_J}^{\Omega_J} M$ equals the canonicalisation $c_M$.

First note that all maps in the diagram are in fact $\widehat{k[\Gamma]}$-linear: $\id_{k[\Gamma]}\otimes\tau_M$ is trivially $\widehat{k[\Gamma]}$-linear. $c_{\HYI_J^K(M)}$ is $\widehat{k[\Gamma]}$-linear because it is a canonicalisation. $\Ind_{H_K}^{H_S}(c_M)$ is $\widehat{k[\Gamma]}$-linear because $c_M$ is and $\Ind_{H_K}^{H_S}$ maps $\widehat{k[\Gamma]}$-linear maps to $\widehat{k[\Gamma]}$-linear maps. That $t_{\Res_{H_K}^{\Omega_K}(M)}$ is also $\widehat{k[\Gamma]}$-linear can readily be verified.

\medbreak
Next we prove that the counter-clockwise composition maps $xy|m$ into $T_{xy}\otimes m + \sum_{w\in D_J^S} T_w \otimes k[\Gamma]_{>0} M$ for all $(x,y)\in D_K^S\times D_J^K$:

\begin{align*}
	1\otimes xy|m &\xmapsto{\mathmakebox[\lenarrow]{\id\otimes\tau_M^{-1}}} 1\otimes x|y|m \\
	&\xhookrightarrow{\mathmakebox[\lenarrow]{c_{\HYI_J^K(M)}}} T_x\otimes y|m + \sum_{u\in D_K^S} T_u\otimes k[\Gamma]_{>0} \HYI_J^K(M) \\
	&\xhookrightarrow{\mathmakebox[\lenarrow]{\Ind_{H_K}^{H_S}(c_M)}} T_x\otimes T_y\otimes m + \sum_{(u,v)\in D_K^S \times D_J^K} T_u\otimes T_v\otimes k[\Gamma]_{>0} M \\
	&\xhookrightarrow{\mathmakebox[\lenarrow]{t_{\Res_{H_J}^{\Omega_J}(M)}}} T_{xy}\otimes m + \sum_{uv\in D_J^S} T_{uv}\otimes k[\Gamma]_{>0} M
\end{align*}
Theorem \ref{canonicalisation_from_shadows} shows that $c_M$ is the only $\widehat{k[\Gamma]}$-linear map that maps $1\otimes xy|m$ into $T_{xy}\otimes m + \sum_{w\in D_J^S} T_w\otimes k[\Gamma]_{>0} M$. This completes our proof that $\id_{k[\Gamma]}\otimes\tau_M$ is $H$-linear and natural in $M$.

\medbreak
Furthermore $\tau_M(e_s x|y|m) = e_s \tau_M(x|y|m)$ follows directly from lemma \ref{lemma:deodhar} and the definition of the $\Omega$-action so that $\tau_M$ is $\Omega$-linear by proposition \ref{H_and_Omega_linearity}.
\end{proof}

\subsection{The Mackey theorem for Howlett-Yin induction}

\begin{remark}
The next natural question is whether there exists a Mackey decomposition for the Howlett-Yin induction. Recall that the Mackey formula for group representations says
\[\Res_{W_K}^W \Ind_{W_J}^W(V) = \bigoplus_{d} \Ind_{W_K \cap {^d W_J}}^{W_K} \Res_{W_K\cap{^d W_J}}^{^d W_J}(^d V)\]
for all $k[W_J]$-modules $V$. Here the sum runs over a set of representatives $d$ for $W_K$-$W_J$-double cosets and $^d(-)$ denotes conjugation by $d$. The conjugated representation $^d V$ is isomorphic as a $k[W_K\cap{^d W_J}]$-module to $d\otimes V \subseteq \Ind_{W_J}^W(V)$.

One can show that parabolic subgroups of Coxeter groups are well-behaved in that $W_K\cap{^d W_J} = W_{K\cap{^d J}}$ if one chooses $d$ of minimal length in its double coset.

\medbreak
A similar formula also holds at the level of Hecke algebras:
\[\Res_{H_K}^H \Ind_{H_J}^H(V) = \bigoplus_{d} \Ind_{H_{K \cap {^d J}}}^{H_K} (^d V)\]
for all $H_J$-modules $V$. Here $d$ runs over the set of representatives of $W_K$-$W_J$-double cosets of shortest length and $^d V$ is the $H_{K\cap{^d J}}$-module $T_d\otimes V \subseteq \Ind_{H_J}^H(V)$. The reason for both of these formulas is that $W$ decomposes as disjoint union of double cosets and $H$ decomposes as a direct sum of $H_K$-$H_J$-bimodules accordingly.

\medbreak
Unfortunately there is no reason to expect that $\HYI_J^S(M)$ decomposes into a direct sum over double cosets because $\Omega$, unlike $k[W]$ and $H$, does not have such a direct sum decomposition (and in fact $\HYI_J^S(M)$ can be an indecomposable $\Omega$-module). Instead we will find a \emph{filtration} indexed by the double cosets whose layers play the role of the direct summands in the Mackey decomposition.
\end{remark}

\begin{lemma}
Let $J,K\subseteq S$. Define $D_{KJ} := D_K^{-1}\cap D_J$. Then
\begin{enumerate}
	\item $D_{KJ}$ is a system representatives of $W_K$-$W_J$-double cosets in $W$. More precisely $d\in D_{KJ}$ if and only if it is the unique element of minimal length within its double coset.
	\item $D_J^S = \coprod\limits_{d\in D_{KJ}} D_{K\cap{^d\!J}}^K\cdot d$.
	\item For $d\in D_{KJ}$ and $x\in D_{K\cap{^d\! J}}^K$:
	\[D_J^\ast(xd)\cap K = D_{K\cap{^d\!J}}^\ast(x)\cap K\]
	where $\ast\in\Set{+,0,-}$.
\end{enumerate}
\end{lemma}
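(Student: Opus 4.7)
My plan is to handle the three parts in order, relying on standard double-coset results of Coxeter group theory for (a) and (b), and then deducing (c) from (b) by a short length argument. Both (a) and (b) are classical: every $(W_K,W_J)$-double coset contains a unique minimal-length element, and these minima are precisely $D_K^{-1}\cap D_J$; moreover, for each $d\in D_{KJ}$ the double coset $W_K d W_J$ decomposes as $W_K d W_J = \bigsqcup_{x\in D_{K\cap{^d\!J}}^K} x d W_J$, where $xd$ is the unique minimum-length element of its right $W_J$-coset, using the length additivity $l(xd) = l(x)+l(d)$ which follows from $d\in D_K^{-1}$ and $x\in W_K$. Both statements can be cited from \cite{geckpfeiffer}.

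Given (a) and (b), part (c) is a short length calculation. Fix $s\in K$. Because $s,x\in W_K$ and $d\in D_K^{-1}$, length additivity yields $l(sxd) = l(sx)+l(d)$ and $l(xd) = l(x)+l(d)$, so
\[s\in D_J^-(xd)\cap K \iff sx<x \iff s\in D_{K\cap{^d\!J}}^-(x)\cap K,\]
using that by definition $D_A^-(w) = \{t\in S : tw<w\}$ is independent of the subset $A$. For the distinction between $+$ and $0$, assume $sx>x$, equivalently $sxd>xd$; then $sx\in W_K$, so $sxd$ lies in the left $W_K$-coset $W_K d$, and by the uniqueness of the decomposition in (b) we have $sxd\in D_J$ iff $sx\in D_{K\cap{^d\!J}}^K$, i.e.\ iff $s\in D_{K\cap{^d\!J}}^+(x)$. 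The $0$ case is then the complement of $+$ and $-$ within $\{s\in K : sx>x\}$, so it agrees automatically.

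The main obstacle is really just (b): once the decomposition $D_J = \coprod_d D_{K\cap{^d\!J}}^K\cdot d$ is in hand together with its uniqueness property, part (c) reduces to splitting $\{s\in K\}$ into the three cases $sx<x$, $sx>x$ with $sx\in D_{K\cap{^d\!J}}^K$, and $sx>x$ with $sx\notin D_{K\cap{^d\!J}}^K$, and applying length additivity. There are no genuinely new combinatorial ideas here beyond the careful application of standard Coxeter-group results.
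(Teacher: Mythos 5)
Your proof of (c) is correct, and it differs from the paper's treatment: the paper simply cites all three parts from Geck--Pfeiffer \cite[2.1.6--2.1.9]{geckpfeiffer}, whereas you cite only (a) and (b) and derive (c) yourself. Your derivation is sound: the length-additivity $l(sxd)=l(sx)+l(d)$ and $l(xd)=l(x)+l(d)$ (valid because $s,x\in W_K$ and $d$ is minimal in $W_K d$) gives the equivalence $sxd<xd \iff sx<x$, which settles the $-$ case; the implication $wd\in D_J \iff w\in D_{K\cap{}^d\!J}^K$ for $w\in W_K$ (which follows from the disjointness of the decomposition in (b) together with the disjointness of double cosets) settles the $+$ case; and the $0$ case is then automatic by complementation inside $K$. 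This makes (c) a genuine corollary of (a) and (b) rather than an independent citation, which is a modest gain in self-containedness at no cost in length. One small note: you phrase the key step as ``by uniqueness of the decomposition in (b)''; it is slightly cleaner to observe that the forward direction ($w\in D_{K\cap{}^d\!J}^K \Rightarrow wd\in D_J$) is literally what (b) asserts, while the converse uses that distinct $a\in D_{KJ}$ give disjoint double cosets, so any $wd\in D_J$ with $w\in W_K$ must sit in the $d$-component of the decomposition.
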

\begin{proof}
See \citep[2.1.6--2.1.9]{geckpfeiffer}
\end{proof}

\begin{remark}
Fix some $d\in D_{KJ}$. For any $\Omega_J$-module $M$ one can define an $\Omega_{K\cap{^d\!J}}$-module $^d M$ by 
\[e_s\cdot {^d m} := {^d}(e_{s^d} m) \quad\text{and}\quad x_s\cdot {^d m} := {^d}(x_{s^d} m).\]

Similarly for any $H_J$-module $V$ one can define a $H_{K\cap{^d\!J}}$-module $^d V$ by 
\[T_s\cdot {^d v} := {^d}(T_{s^d} v).\]
Note that $^d V$ is isomorphic to the $H_{K\cap{^d\!J}}$-submodule $T_d\otimes V\subseteq \Ind_{H_J}^{H_S} V$.
\end{remark}

\begin{theorem}\label{mackey_hyi}
Let $J,K\subseteq S$. Furthermore let $M$ be a $k\Omega_J$-module and $V:=\Res_{H_J}^{\Omega_J} M$ its associated $kH_J$-module. For all $d\in D_{KJ}$ define the following $k$-submodules of $\HYI_J^S(M)$ and $\Ind_{H_J}^{H_S}(V)$ respectively:
\[F^{\leq d} \HYI_J^S(M) := \sum_{\substack{a\in D_{KJ},\; a\leq d \\ w\in D_{K\cap {^a\!\! J}}^K}} wa|M\]
\[F^{\leq d} \Ind_{H_J}^{H_S}(V) := \sum_{\substack{a\in D_{KJ},\; a\leq d \\ w\in D_{K\cap {^a\!\! J}}^K}}  T_{wa}\otimes V\]
The following hold for all $d\in D_{KJ}$:
\begin{enumerate}
	\item $F^{\leq d} \HYI_J^S(M)$ is a $\Omega_K$-submodule, $F^{\leq d} \Ind_{H_J}^{H_S}(V)$ is a $H_K$- and $\widehat{k[\Gamma]}$-submodule and the canonicalisation map $c_M$ identifies these with each other.
	\item The map $\Psi_M^d: \HYI_{K\cap {^d\! J}}^K(^d M) \to F^{\leq d}\HYI_J^S(M) / F^{<d}\HYI_J^S(M)$ which is defined by $w|{^d m} \mapsto wd|m$ for all $w\in D_{K\cap{^d\!J}}^K$ is a natural isomorphism of $\Omega_K$-modules.
\end{enumerate}
\end{theorem}

\begin{remark}
In \cite{howlett2004inducingII} Howlett and Yin also proved a Mackey-style theorem, which is a bit weaker than what is claimed here. Howlett and Yin only identify sub-$W_K$-graphs of the induced $W$-graph and prove that they are the same as the $W_K$-graphs for summands appearing in the Mackey formula.

Speaking in terms of modules this proves that $\HYI_{K\cap{^d J}}^K(^d M)$ appears as some subquotient of $\Res_{\Omega_K}^{\Omega_S}(\HYI_J^S(M))$. This is similar to describing a module by listing its composition factors. The new theorem states not only that these composition factors arise somewhere in the module but also identifies an explicit filtration in which they arise. (And also generalises the result to the multi-parameter case and non-free modules)
\end{remark}

\begin{proof}
That $F^{\leq d}\HYI_J^S(M)$ is a $\Omega_K$-submodule follows directly from the definition of the $\Omega$-action on $\HYI_J^S(M)$ and the observation $w'd' \leq wd \implies d'\leq d$ for all $w,w'\in W_K$, $d,d'\in D_{KJ}$. That $F^{\leq d}\Ind_{H_J}^{H_S}(V)$ is a $H_K$-submodule follows from the fact that it is equal to $\sum_{a\leq d} \operatorname{span}\Set{T_w | w\in W_K a W_J}\otimes V$.

\medbreak
To prove the second claim, observe that the given map is certainly a $k$-linear bijection. Next we will show that it makes the diagram in Figure \ref{fig:mackey_hyi} commute.

\begin{figure}[ht]
\centering
\begin{tikzpicture}
	\node (FM) at (0,2) {$F^{\leq d} \HYI_J^S(M) / F^{<d} \HYI_J^S(M)$};
	\node (IndM) at (0,0) {$F^{\leq d} \Ind_{H_J}^{H_S}(V) / F^{<d} \Ind_{H_J}^{H_S}(V)$};
	\node (IndM2) at (6,0) {$\Ind_{H_{K\cap{^d\!J}}}^{H_K}({^d} V)$};
	\node (HYIM) at (6,2) {$\HYI_{K\cap{^d\!J}}^K (^d M)$};
	
	\path[->,font=\scriptsize]
		(FM) edge node[left]{$\overline{c_M}$} (IndM)
		(IndM2) edge node[below]{$T_{xd}\otimes m \leftarrow T_x\otimes {^d m}$} (IndM)
		(HYIM) edge node[right]{$c_{^d M}$} (IndM2)
		(HYIM) edge node[above]{$\Psi_M^d$} (FM);
\end{tikzpicture}
\caption{Mackey isomorphism for Howlett-Yin-Induction}%
\label{fig:mackey_hyi}%
\end{figure}

This will again be done by utilising the uniqueness of the canonicalisation map. Observe that all maps involved in the diagram are $\widehat{k[\Gamma]}$-linear bijections. It is also readily verified that both the counter-clockwise and the clockwise compositions map $w|{^d m}$ into $T_{wd}\otimes m + \sum_{x\in D_{K\cap^{d\!J}}^K} T_{xd}\otimes k[\Gamma]_{>0} M$ so that the diagram indeed commutes by uniqueness of the canonicalisation map.

Because canonicalisation is $H_K$-linear, we conclude that $\Psi_M^d$ is also $H_K$-linear. It follows directly from the above lemma that $\Psi_M^d(e_s \cdot w|{^d m}) = e_s \cdot \Psi_M^d(w|{^d m})$ for $s\in K$. Proposition \ref{H_and_Omega_linearity} implies again that $\Psi_M^d$ is indeed $\Omega_K$-linear.
\end{proof}

\section{More applications}\label{section:applications}

\subsection{An improved algorithm to compute \texorpdfstring{$p$}{p} and \texorpdfstring{$\mu$}{mu}}

Denote with $p_{x,z}^J$ and $\mu_{x,z}^{s,J}$ the elements in $\IZ[\Gamma]\Omega_J$ from Definition \ref{lemmadef:iota_p} and \ref{lemma:def_mu} respectively to make the dependence from $J\subseteq S$ explicit. Note that these elements do not depend on $S$ in the sense that for any parabolic subgroup $W_J\subseteq W_K\subseteq W$ with $x,z\in W_K$ the elements computed w.r.t. the inclusion $J\subseteq S$ are the same as when computed w.r.t. the inclusion $J\subseteq K$.

\begin{proposition}
Suppose $J\subseteq K\subseteq S$. Let $u,x\in D_K^S$ and $v,y\in D_J^K$. Furthermore let $\omega_J^K : \Omega_K \to \Omega_J^{D_J^K \times D_J^K}$ be the matrix representation induced by the action of $\Omega_K$ on $\HYI_J^K(\Omega_J) = \bigoplus_{v\in D_J^K} v|\Omega_J$.
\begin{enumerate}
	\item If $u\not\leq x$, then $\mu_{uv,xy}^{s,J}$ and $p_{uv,xy}^J = 0$.
	\item If $u=x$, then
	\[\mu_{uv,xy}^{s,J} = \begin{cases} \mu_{v,y}^{s^x,J} & s\in D_K^0(x) \\ 0 & \text{otherwise} \end{cases} \quad\text{and}\quad p_{uv,xy}^J = p_{v,y}^J\]
	\item If $u<x$, then
	\[\mu_{uv,xy}^{s,J} = \omega_J^K(\mu_{u,x}^{s,K})_{v,y}\quad\text{and}\quad p_{uv,xy}^J = \sum_{\substack{t\in D_J^K \\ v\leq t}} p_{v,t}^J \cdot \omega_J^K(p_{u,x}^K)_{ty}\]
\end{enumerate}
\end{proposition}
\begin{proof}
Considering that the diagram in Figure \ref{fig:transitivity_hyi} is commutative, one finds that $x|y|m \in \HYI_K^S(\HYI_J^K(M))$ is mapped both to
\[\sum_{u\leq x} \sum_{v\leq t} T_u T_v \otimes p_{v,t}^J \omega_J^K(p_{u,x}^K)_{ty} \cdot m
\quad\text{and to}\quad
\sum_{u,v} T_{uv}\otimes p_{uv,xy}^J\cdot m.\]
Setting $M=\Omega_J$ and $m=1$, we obtain the equations for $p_{uv,xy}^J$.

\medbreak
We consider the identification $\HYI_K^S \HYI_J^K(M) \isomorphic \HYI_J^S(M)$ from Theorem \ref{transitivity} and the action of $x_s$ on both modules:
\begin{align*}
	x_s \cdot xy|m &= \begin{cases}
		sxy|m + \sum_{uv} uv|\mu_{uv,xy}^{s,J} m & s\in D_J^+(xy) \\
		xy|x_{s^{xy}} m + \sum_{uv} uv|\mu_{uv,xy}^{s,J} m & s\in D_J^0(xy) \\
		0 & s\in D_J^-(xy)
	\end{cases} \\
	x_s \cdot x|y|m &= \begin{cases}
		sx|y|m + \sum_{u<x} u|\mu_{u,x}^{s,K}\cdot y|m & s\in D_K^+(x) \\
		x|x_{s^{x}}\cdot y|m + \sum_{u<x} u|\mu_{u,x}^{s,K}\cdot y|m & s\in D_K^0(x) \\
		0 & s\in D_K^-(x)
	\end{cases} \\
	&=  \begin{cases}
		sx|y|m + \sum_{u<x} u|\mu_{u,x}^{s,K}\cdot y|m & s\in D_K^+(x) \\
		x|s^{x}y|m + \sum_{v<y} x|v|\mu_{v,y}^{s^x,J} m + \sum_{u<x} u|\mu_{u,x}^{s,K}\cdot y|m & s\in D_K^0(x), s^x \in D_J^+(y) \\
		x|y|x_{s^{xy}} m + \sum_{v<y} x|v|\mu_{v,y}^{s^x,J} m + \sum_{u<x} u|\mu_{u,x}^{s,K}\cdot y|m & s\in D_K^0(x), s^x\in D_J^0(y) \\
		0 & s\in D_K^0(x), s^x\in D_J^-(y) \\
		0 & s\in D_K^-(x)
	\end{cases}
\end{align*}
Therefore
\[\sum_{uv} u|v|\mu_{uv,xy}^{s,J} m = \begin{cases}
	\sum_{u<x} u|\mu_{u,x}^{s,K}\cdot y|m & s\in D_K^+(x) \\
	\sum_{v<y} x|v|\mu_{v,y}^{s^x,J} m + \sum_{u<x} u|\mu_{u,x}^{s,K}\cdot y|m & s\in D_K^0(x), s^x \in D_J^+(y)\cup D_J^0(y) \\
	0 & \text{otherwise}
	\end{cases}\]
Comparing the component $u|\HYI_J^K(M)$, we find
\[\sum_v v|\mu_{uv,xy}^{s,J} m = \begin{cases}
	\mu_{u,x}^{s,K}\cdot y|m & u<x \\
	\sum_{v<y} v|\mu_{v,y}^{s^x,J} m & u=x, s\in D_K^0(x) \\
	0 & \text{otherwise}
	\end{cases}\]
Now set $M:=\Omega_J$ and $m:=1$.
\end{proof}

This suggests the following algorithm for computing $p_{w,z}^J$ and $\mu_{w,z}^{s,J}$ for all $w,z\in D_J^S$ and all $s\in S$:
\begin{algorithm}\label{algorithm:mu_inductive}
Input: $J\subseteq S$.

Output: $p_{w,x}, \mu_{w,x}^s\in\IZ[\Gamma]\Omega_J$ for all $w,x\in D_J$ and all $s\in S$.

\begin{enumerate}[label=\arabic*.]
	\item Choose a flag $J=K_0 \subsetneq K_1 \subsetneq \ldots \subsetneq K_n=S$.
	\item For all $i=0,\ldots,n-1$, all $u,x\in D_{K_i}^{K_{i+1}}$, and all $s\in K_{i+1}$ compute $p_{u,x}^{K_i}$ and $\mu_{u,x}^{s,K_i}\in\Omega_{K_i}$ with Algorithm \ref{algorithm:p_mu}.
	\item For $i=0,\ldots,n-1$ compute $p_{w,x}^J$ and $\mu_{w,z}^{s,J}$ for all $w,z\in D_J^{K_{i+1}}$ and all $s\in K_{i+1}$ as follows:
	\begin{enumerate}[label=\arabic{enumi}.\arabic*.]
		\item Write $w=uv$, $z=xy$ with $u,x\in D_{K_i}^{K_{i+1}}$ and $v,y\in D_J^{K_i}$.
		\item If $u\not\leq x$, then $\mu_{w,z}^{s,J} = 0$ and $p_{w,z}^J=0$.
		\item If $u=x$, then $\mu_{w,z}^{s,J} = \begin{cases} \mu_{v,y}^{s^x,J} & s\in D_{K_i}^0(x) \\ 0 & \text{otherwise} \end{cases}$ and $p_{w,z}^J = p_{v,y}^{K_i}$.
		\item If $u<x$, then compute $\omega_J^{K_i}(\mu_{v,y}^{s,K_i})$ and $\omega_J^{K_i}(p_{u,x}^{K_i})$. Assemble the $p_{v',y'}^J$ with $v',y'\in D_J^{K_i}$ into the matrix $P\in (\IZ[\Gamma_{\geq 0}] \Omega_J)^{D_J^{K_i} \times D_J^{K_i}}$.
		
		Then $\mu_{w,z}^{s,J} = \omega_J^{K_i}(\mu_{u,x}^{s,K_i})_{v,y}$ and $p_{w,z}^J = \left(P \cdot \omega_J^{K_i}(p_{u,x}^{K_i})\right)_{v,y}$.
	\end{enumerate}
\end{enumerate}
\end{algorithm}

\begin{remark}
Note that the action of $\Omega_{K_i}$ on $\HYI_J^{K_i}(\Omega_J)$ only involves values of $\mu_{w',z'}^{s',J}$ where $w',z'\in D_J^{K_i}$ and $s'\in K_i$ which are already known by the previous iteration of the loop.

\medbreak
The big advantage of this algorithm over a direct computation of all $\mu_{w,z}^{s,J}$ with algorithm \ref{algorithm:p_mu} is that the expensive recursion over $D_J^S$ is replaced by $n$ collectively cheaper recursions over $D_{K_0}^{K_1}, D_{K_1}^{K_2}, \ldots, D_{K_{n-1}}^{K_n}$ so that fewer polynomials $p_{w,z}$ need to be computed and the computed elements are less complex (measured for example by the maximal length of occurring words in the generators $e_s, x_s$ of $\Omega$) and therefore need less memory.

Additionally the $n$ calls to algorithm \ref{algorithm:p_mu} in step 2 are independent of each other and can be executed in parallel which can lead to a sizeable speed-up.
\end{remark}

\begin{remark}
The Mackey-isomorphism $\Psi^d$ from Theorem \ref{mackey_hyi} translates into the equation
\[\mu_{yd,wd}^{s,J} = \kappa_d(\mu_{y,w}^{s,K\cap{^d\!J}})\]
for all $d\in D_{KJ}$, all $y,w\in D_{K\cap{^d\!J}}^K$, and all $s\in K$ where $\kappa_d: \Omega_{K\cap{^d\!J}} \to \Omega_{K^d\cap J}$ is the isomorphism $e_s\mapsto e_{s^d}, x_s\mapsto x_{s^d}$.

Provided one knows all $\mu_{u,v}^{s,T}$ for all $T\subseteq K$, all $u,v\in D_T^K$, and all $s\in K$, one can use that knowledge to partially calculate $\mu_{y,w}^{s,J}$. This in turn might be used to give the recursion from algorithm \ref{algorithm:p_mu} a head start and reduce the necessary recursion depth.
\end{remark}

\subsection{Induction of left cells}

Transitivity of Howlett-Yin induction also enables us to effortlessly reprove a result of Geck regarding the induction of cells (see \cite{geck2003induction}).

\begin{remark}
Recall the definition of (left) Kazhdan-Lusztig cells: Define a preorder $\preceq_\mathcal{L}$ on $W$ by defining $\Set{y\in W | y\preceq_\mathcal{L} z}$ to be the smallest subset $\mathfrak{C}\subseteq W$ such that the subspace $\operatorname{span}_{\IZ[\Gamma]} \Set{C_y | y\in\mathfrak{C}}$ is a $H$-submodule of $H$. The preorder then defines a equivalence relation $\sim_\mathcal{L}$ as usual by $x\sim_\mathcal{L} y :\!\iff x\preceq_\mathcal{L} y\preceq_\mathcal{L} x$. The equivalence classes of this relation are called \emph{left cells}.
\end{remark}

\begin{proposition}
\begin{enumerate}
	\item $\mathfrak{C}\subseteq W$ is $\preceq_{\mathcal{L}}$-downward closed if and only if $\operatorname{span}_\IZ\Set{x\vert 1 | x\in\mathfrak{C}}$ is a $\Omega$-submodule of $\HYI_\emptyset^S(\Omega_\emptyset)$.
	\item If $\mathfrak{C}\subseteq W_J$ is a union of left cells, then $D_J^S\cdot\mathfrak{C}\subseteq W$ is also a union of left cells.
\end{enumerate}
\end{proposition}
\begin{proof}
Consider the Kazhdan-Lusztig-$W$-graph $\HYI_\emptyset^S(\Omega_\emptyset)$. Then $\Set{x\vert 1 | x\in W}$ constitute a $\IZ$-basis of this module which (under the canonicalisation map) corresponds to the basis $\Set{C_x | x\in W}$. That $\mathfrak{C}$ is a $\preceq_\mathcal{L}$-downward closed means that $\operatorname{span}_{\IZ[\Gamma]}\Set{C_x | x\in\mathfrak{C}}$ is a $H$-submodule of $H$. Because $e_s \cdot x|1 \in \Set{0,x\vert 1}$ for all $x$ and $s$, every subset of the form $\operatorname{span}_\IZ\Set{x\vert 1 | x\in\mathfrak{C}}$ is closed under multiplication with $e_s$. Since $e_s$ and $T_s$ together generate $\Omega$ this proves the first statement.

Now let $\mathfrak{C}\subseteq W_J$ be $\preceq_{\mathcal{L}}$-downward closed and $M:=\operatorname{span}_\IZ\Set{x\lvert 1 | x\in\mathfrak{C}}$ be the corresponding submodule of $\HYI_\emptyset^J(\Omega_\emptyset)$. Then $\HYI_J^S(M)=\operatorname{span}_\IZ\Set{w\vert x\vert 1 | w\in D_J^S, x\in\mathfrak{C}}$ is a submodule of $\HYI_J^S \HYI_\emptyset^J(\Omega_\emptyset) \isomorphic \HYI_\emptyset^S(\Omega_\emptyset)$. In other word $D_J^S \cdot \mathfrak{C}$ is a $\preceq_\mathcal{L}$-downward closed set of $W$. Because every union of cells can be written as a set difference $\mathfrak{C}_1 \setminus \mathfrak{C}_2$ for some downward closed sets $\mathfrak{C}_2 \subseteq \mathfrak{C}_1\subseteq W$, this proves the second statement.
\end{proof}

\begin{remark}
Modifying Algorithm \ref{algorithm:mu_inductive} such that only $\mu$-values for elements in $D_J^S\cdot \mathfrak{C}$ are computed, we recover Geck's PyCox algorithm for the decomposition into left cells.
\end{remark}
%
%
%
%
%
%

\appendix
\section{Proof of Lemma \ref{main_theorem_formulas}}
We will prove the four equations in Lemma \ref{main_theorem_formulas} simultaneously with a double induction. We will induct over $l(z)$ and assume that all four equations hold for all pairs $(x',z')$ with $l(z')<l(z)$. For any fixed $z$ we will induct over $l(z)-l(x)$. Observe that all equations are trivially true if $l(x)>l(z)+1$ because all occurring $p$ and $\mu$ are zero. We will therefore assume that the equations also hold for all pairs $(x',z)$ with $l(x')>l(x)$.

\begin{proof}[Proof of \ref{lemma:formula_p_mu}]
We denote with $f_{xz}$ the difference between the right hand side and the left hand side of the equation. Then by the above considerations:
\[c(\omega(C_s)z|m) - C_s c(z|m) = \sum_x T_x\otimes f_{xz} m\]
We will show $f_{xz}\in\IZ[\Gamma_{>0}]\Omega_J$ and conclude $f_{xz}=0$ using lemma \ref{positive_symmetric_implies_zero}. Note that both $c(\omega(C_s)z|m)$ as well as $C_s c(z|m)$ are $\iota$-invariant elements of $\Ind_{H_J}^H \Res_{H_J}^{\Omega_J} M$ because $c$ is $\widehat{k[\Gamma]}$-linear and $\iota(C_s) = C_s$.

\bigbreak
Case 1: $z\in D_{J,s}^+$.

Case 1.1.+1.2: $x\in D_{J,s}^0 \cup x\in D_{J,s}^-$

In both cases $f_{xz}\in\IZ[\Gamma_{>0}]\Omega_J$ by definition of $\mu$.

Case 1.3.: $x\in D_{J,s}^+$.

Observe that $p_{sx,z} = 1 \iff sx=z \iff x=sz \iff p_{x,sz} =1$ so that $p_{sx,z} - p_{x,sz}$ is always an element of $\IZ[\Gamma_{>0}]\Omega_J$. Thus
\begin{align*}
f_{xz} &= \underbrace{p_{z,sx} - p_{sx,z}}_{\in\IZ[\Gamma_{>0}]\Omega_J} + \underbrace{v_s p_{x,z}}_{\in\IZ[\Gamma_{>0}]} + \sum_{y} p_{x,y} \mu_{y,z}^s \\
	&\equiv \sum_{y} p_{x,y} \mu_{y,z}^s \mod \IZ[\Gamma_{>0}]\Omega_J \\
	&= \sum_{y\in D_{J,s}^+} p_{x,y} \underbrace{\mu_{y,z}^s}_{=0} + \sum_{\substack{y\in D_{J,s}^0 \\ x\leq y<z}} p_{x,y} \mu_{y,z}^s + \sum_{\substack{y\in D_{J,s}^- \\ x\leq y<z}} p_{x,y} \mu_{y,z}^s \\
\intertext{Now since $x\in D_{J,s}^+$ we cannot have $x=y$ in the both the second and third sum so that we can use \ref{lemma:formula_p_mu} for $(x,y)$ and \ref{lemma:emu_mu} for $(y,z)$ in the second sum as well as \ref{lemma:formula_p_mu} for $(x,y)$ in the third sum so that we obtain}
f_{xz} &= \sum_{\substack{y\in D_{J,s}^0 \\ x\leq y<z}} (-v_s^{-1})(p_{x,y} C_{s^y} - p_{sx,y} + \sum_{x\leq y'<y} p_{x,y'}\mu_{y',y}^s) (e_{s^y}\mu_{y,z}^s) + \sum_{y\in D_{J,s}^-} \underbrace{(p_{sx,y}}_{\in\IZ[\Gamma_{\geq 0}]} \underbrace{(-v_s)) \mu_{y,z}^s}_{\in\IZ[\Gamma_{>0}]} \\
	&\equiv \sum_{\substack{y\in D_{J,s}^0 \\ x\leq y<z}} (-v_s^{-1})(p_{x,y} C_{s^y} - p_{sx,y} + \sum_{y'} p_{x,y'}\mu_{y',y}^s) e_{s^y} \mu_{y,z}^s \mod \IZ[\Gamma_{>0}]\Omega_J \\
\intertext{Now $y<z$ so that we can use \ref{lemma:mue_zero} for $(y',y)$ and obtain}
f_{xz} &= \sum_{\substack{y\in D_{J,s}^0 \\ x\leq y<z}} (-v_s^{-1})(p_{x,y} C_{s^y} - p_{sx,y} + \sum_{y'} p_{x,y'}\underbrace{\mu_{y',y}^s) e_{s^y}}_{=0} \mu_{y,z}^s \\
	&= \sum_{\substack{y\in D_{J,s}^0 \\ x\leq y<z}} (-v_s^{-1})(p_{x,y} C_{s^y} e_{s^y} - p_{sx,y} e_{s^y}) \mu_{y,z}^s \\
	&= \sum_{\substack{y\in D_{J,s}^0 \\ x\leq y<z}} (-v_s^{-1})(p_{x,y} (-v_s-v_s^{-1}) e_{s^y} - p_{sx,y} e_{s^y}) \mu_{y,z}^s \\
\intertext{Applying \ref{lemma:pe} to $(x,y)$ we find}
f_{xz} &= \sum_{y\in D_{J,s}^0} (-v_s^{-1})(-v_s p_{sx,y} e_{s^y} (-v_s-v_s^{-1}) - p_{sx,y} e_{s^y}) \mu_{y,z}^s \\
	&= \sum_{y\in D_{J,s}^0} (-v_s^{-1})(v_s^2 p_{sx,y} e_{s^y}) \mu_{y,z}^s \\
	&= \sum_{y\in D_{J,s}^0} \underbrace{-p_{sx,y}}_{\in\IZ[\Gamma_{\geq 0}]\Omega_J} e_{s^y} \underbrace{v_s \mu_{y,z}^s}_{\in\IZ[\Gamma_{>0}]\Omega_J} \\
	&\equiv 0 \mod\IZ[\Gamma_{>0}]\Omega_J
\end{align*}

\bigbreak
Case 2: $z\in D_{J,s}^0$.

Case 2.1.+2.2: $x\in D_{J,s}^0 \cup x\in D_{J,s}^-$

In both cases $f_{xz}\in\IZ[\Gamma_{>0}]\Omega_J$ by definition of $\mu$.

Case 2.3. $x\in D_{J,s}^+$.

In this case $sx\neq z$ so that $p_{sx,z}\in\IZ[\Gamma_{>0}]\Omega_J$. Therefore:
\begin{align*}
	f_{xz} &= p_{x,z} C_{s^z} + \sum_{y<z} p_{x,y} \mu_{y,z}^s  - p_{sx,z} + v_s p_{x,z} \\
	&\equiv p_{x,z} T_{s^z} + \sum_{y<z} p_{x,y} \mu_{y,z}^s \mod \IZ[\Gamma_{>0}]\Omega_J \\
\intertext{Because $x':=sx$ satisfies $x'>x$ and $x'\in D_{J,s}^-$ we can use the induction hypothesis for $(x'
,z)$ and $(x',y)$ so that}
f_{xz} &= p_{sx',z} T_{s^z} + \sum_{y<z} p_{sx',y} \mu_{y,z}^s \\
	&= (\underbrace{p_{x',z} C_{s^z}+ v_s^{-1} p_{x',z}}_{=p_{x',z} T_{s^z}^{-1}} + \sum_{y'} p_{x',y'} \mu_{y',z}^s) T_{s^z} + \sum_{y\in D_{J,s}^+} p_{x,y} \underbrace{\mu_{y,z}^s}_{=0} \\
	&\quad+ \sum_{y\in D_{J,s}^0} (\underbrace{p_{x',y} C_{s^y} + v_s^{-1} p_{x',y}}_{=p_{x',y} T_{s^y}^{-1}} + \sum_{y'} p_{x',y'} \mu_{y',y}^s)\mu_{y,z}^s \\
	&\quad+ \sum_{y\in D_{J,s}^-} (-(v_s+v_s^{-1})p_{x',y} + v_s^{-1} p_{x',y})\mu_{y,z}^s \\
	&= \underbrace{p_{x',z}}_{\in\IZ[\Gamma_{>0}]\Omega_J} + \sum_{x'\leq y'<z} p_{x',y'} \mu_{y',z}^s T_{s^z} \\
	&\quad+ \sum_{\substack{y\in D_{J,s}^0 \\ x'\leq y}} (p_{x',y} T_{s^y}^{-1} + \sum_{x'\leq y'<y} p_{x',y'} \mu_{y',y}^s)\mu_{y,z}^s \\
	&\quad+ \sum_{y\in D_{J,s}^-} -\underbrace{p_{x',y}}_{\in\IZ[\Gamma_{\geq 0}]\Omega_J} \cdot \underbrace{v_s\mu_{y,z}^s}_{\in\IZ[\Gamma_{>0}]\Omega_J} \\
	&\equiv \sum_{x'\leq y'<z} p_{x',y'} \mu_{y',z}^s (-v_s^{-1} e_{s^z} + v_s (1-e_{s^z}) + x_{s^z}) \\
	&\quad \sum_{\substack{y\in D_{J,s}^0 \\ x'\leq y}} p_{x',y} T_{s^y}^{-1}\mu_{y,z}^s + \sum_{\substack{y\in D_{J,s}^0, y'\in D_J \\ x'\leq y'<y<z}} p_{x',y'} \mu_{y',y}^s \mu_{y,z}^s \mod \IZ[\Gamma_{>0}]\Omega_J \\
\intertext{Because $x<x'$ we can use \ref{lemma:mue_zero} for $(y',z)$ in the first sum, \ref{lemma:emu_mu} for $(y,z)$ in the second and third sum as well as \ref{lemma:mue_zero} for $(y',y)$ in the third sum to obtain}
f_{xz} &= \sum_{x'\leq y'<z} \underbrace{p_{x',y'}}_{\in\IZ[\Gamma_{\geq 0}]\Omega_J} \underbrace{\mu_{y',z}^s v_s}_{\in\IZ[\Gamma_{>0}]\Omega_J} (1-e_{s^z})  \\
	&\quad + \sum_{\substack{y\in D_{J,s}^0 \\ x'\leq y}} p_{x',y} \underbrace{T_{s^y}^{-1} e_{s^y}}_{=-v_s e_{s^y}} \mu_{y,z}^s + \sum_{\substack{y\in D_{J,s}^0, y\in D_J \\ x'\leq y'<y<z}} p_{x',y'} \underbrace{\mu_{y',y}^s e_{s^y}}_{=0} \mu_{y,z}^s \\
	&\equiv \sum_{\substack{y\in D_{J,s}^0 \\ x'\leq y}} \underbrace{p_{x',y}}_{\in\IZ[\Gamma_{\geq 0}]} (-e_{s^y}) \underbrace{v_s \mu_{y,z}^s}_{\in\IZ[\Gamma_{>0}]\Omega_J} \mod \IZ[\Gamma_{>0}]\Omega_J \\
	&\equiv 0 \mod\IZ[\Gamma_{>0}]\Omega_J
\end{align*}
which is what we wanted to prove.

\bigbreak
Case 3: $z\in D_{J,s}^-$.

Case 3.1.: $x\in D_{J,s}^-$.

In this case $sx\in D_{J,s}^+$ so that $sx\neq z$ and thus $p_{sx,z}\in\IZ[\Gamma_{>0}]\Omega_J$. We infer
\[f_{xz} = p_{sx,z} - v_s^{-1} p_{x,z} + v_s p_{x,z} + v_s^{-1} p_{x,z} = p_{sx,z} + v_s p_{x,z} \in \IZ[\Gamma_{>0}]\Omega_J\]

Case 3.2.: $x\in D_{J,s}^0$.

In this case the equation is equivalent to $(1-e_{s^x})p_{x,z} = 0$ by Proposition \ref{eigenspace_Ts}. If $x\not\leq z$ then this is vacuously true because $p_{x,z}=0$. 

Since $z\in D_{J,s}^-$ we can write $z=sz'$ for some $z'\in D_{J,s}^+$ with $z'<z$. We can apply \ref{lemma:formula_p_mu} to $(x,z')$ and find:
\begin{align*}
	(1-e_{s^x}) p_{x,z} &= (1-e_{s^x}) p_{x,sz'} \\
	&= \underbrace{(1-e_{s^x})(C_{s^x}}_{=C_{s^x}} p_{x,z'} - \sum_{y} p_{x,y} \mu_{y,z'}^s) \\
	&= - \sum_{y\in D_{J,s}^+} (1-e_{s^x}) p_{x,y} \underbrace{\mu_{y,z'}^s}_{=0} \\
	&\quad- \sum_{\substack{y\in D_{J,s}^0 \\ x\leq y<z'}} (1-e_{s^x}) p_{x,y} \mu_{y,z'}^s
	- \sum_{\substack{y\in D_{J,s}^- \\ x\leq y<z'}} (1-e_{s^x}) p_{x,y} \mu_{y,z'}^s \\
\intertext{Because $z'<z$ we can apply \ref{lemma:emu_mu} to $(y,z')$ in the second sum and \ref{lemma:formula_p_mu} to $(x,y)$ (in the equivalent formulation $(1-e_{s^x}) p_{x,y}=0$) in the third sum so that we obtain}
(1-e_{s^x}) p_{x,z} &= - \sum_{\substack{y\in D_{J,s}^0 \\ x\leq y<z'}} (1-e_{s^x}) p_{x,y} e_{s^y} \mu_{y,z'}^s
\end{align*}
Using \ref{lemma:pe} for $(x,y)$ all the summands vanish.

\medbreak
Case 3.3.: $x\in D_{J,s}^+$.

In this case $x':=sx$ satisfies $x'>x$ and $x'\in D_{J,s}^-$ so that we find
\begin{align*}
	f_{xz} &= p_{sx,z} - v_s p_{x,z} + v_s p_{x,z} + v_s^{-1} p_{x,z} \\
	&= p_{x',z} + v_s^{-1} p_{sx',z} \\
\intertext{which -- using the induction hypothesis for $(x',z)$ -- equals}
	&= p_{x',z} + v_s^{-1} (-v_s p_{x',z}) \\
	&= 0 \qedhere
\end{align*}
\end{proof}

Since we have now proven \ref{lemma:formula_p_mu} we can use it to prove the other equations:
\begin{proof}[Proof of \ref{lemma:pe} and \ref{lemma:mue_zero}]
If $x\in D_{J,s}^{\pm}$ we can multiply the equation in \ref{lemma:formula_p_mu} with $e_{s^z}$ from the right:
\begin{align*}
	(p_{sx,z}  - v_s^{\pm 1} p_{x,z})e_{s^z} &= p_{x,z} C_{s^z} e_{s^z} + \mu_{x,z}^s e_{s^z} + \sum_{x<y<z} p_{x,y} \mu_{y,z}^s e_{s^z} \\
	&= p_{x,z} (-v_s-v_s^{-1}) e_{s^z} + \mu_{x,z}^s e_{s^z} + \sum_{x<y<z} p_{x,y} \mu_{y,z}^s e_{s^z} \\
\intertext{Now we can apply \ref{lemma:mue_zero} to $(y,z)$ in the sum and obtain:}
	&= p_{x,z} (-v_s-v_s^{-1}) e_{s^z} + \mu_{x,z}^s e_{s^z} \\
	\implies(p_{sx,z}  + v_s^{\mp 1} p_{x,z})e_{s^z}  &= \mu_{x,z}^s e_{s^z}
\end{align*}
If $x\in D_{J,s}^+$, then $\mu_{x,z}^s=0$ so that $p_{sx,z} = -v_s^{-1} p_{x,z}$ and $\mu_{x,z}^s e_{s^z} = 0$. If $x\in D_{J,s}^-$, then the left hand is contained in $\IZ[\Gamma_{>0}]\Omega_J$ because $sx,x\neq z$ while the right hand side is $\overline{\phantom{m}}$-invariant so that both sides equal to zero.

\medbreak
Now consider the case $x\in D_{J,s}^0$. Then the following holds by \ref{lemma:formula_p_mu} for $(x,z)$:
\begin{align*}
	(v_s+v_s^{-1})p_{x,z} e_{s^z} &= - p_{x,z} C_{s^z} e_{s^z} \\
	&= -C_{s^x} p_{x,z} e_{s^z} + \mu_{x,z} e_{s^z} + \sum_{x<y<z} p_{x,y} \mu_{y,z}^s e_{s^z}
\end{align*}
In the sum we can use \ref{lemma:mue_zero} for $(y,z)$ and find:
\begin{align*}
	(v_s+v_s^{-1})p_{x,z} e_{s^z} &= -C_{s^x} p_{x,z} e_{s^z} + \mu_{x,z} e_{s^z} \\
	\implies \mu_{x,z}^s e_{s^z} &= (C_{s^x} + v_s+v_s^{-1})p_{x,z}e_{s^z} \\
	&= ((v_s+v_s^{-1})(1-e_s) + x_s)p_{x,z}e_{s^z} \\
	&= ((v_s+v_s^{-1}) + x_s)(1-e_s)p_{x,z}e_{s^z}
\end{align*}
Now consider the exponents that occur at both sides of the equation: On the left hand side all exponents are $<L(s)$. On the right hand side this means $v_s (1-e_s)p_{x,z}e_{s^z} = 0$ because $p_{x,z}\in\IZ[\Gamma_{\geq 0}]\Omega_J$. Therefore $(1-e_s)p_{x,z}e_{s^z}=0$ and $\mu_{y,z}^s e_{s^z} = 0$.
\end{proof}

\begin{proof}[Proof of lemma \ref{lemma:emu_mu}]
Wlog we assume $x<z$ and $z\in D_{J,s}^+\cup D_{J,s}^0$ since otherwise $\mu_{x,z}^s=0$. Because $\overline{\mu_{x,z}^s}=\mu_{x,z}^s$ we only need to prove $(1-e_{s^x})\mu_{x,z}^s \equiv 0 \mod\IZ[\Gamma_{>0}]\Omega_J$.

Lemma \ref{lemma:formula_p_mu} for $(x,z)$ implies
\begin{align*}
0	&= (1-e_{s^x}) C_{s^x} p_{x,z} \\
	&= (1-e_{s^x}) p_{x,x} \mu_{x,z}^s + \left\lbrace\begin{array}{ll}
		(1-e_{s^x}) p_{x,sz}  & z\in D_{J,s}^+ \\
		(1-e_{s^x}) p_{x,z} C_{s^z} & z\in D_{J,s}^0
		\end{array}\right\rbrace + \sum_{x<y<z} (1-e_{s^x})p_{x,y} \mu_{y,z}^s \\
\intertext{Because $x\in D_{J,s}^0$, $x=sz$ cannot hold in the first case so that $p_{x,sz}\in\IZ[\Gamma_{>0}]\Omega_J$. In the second case we use $C_{s^z} = e_{s^z} C_{s^z}$ and the now proven \ref{lemma:pe}. We obtain}
0	&\equiv (1-e_{s^x}) \mu_{x,z}^s + \sum_{\substack{y\in D_{J,s}^0 \\ x<y<z}} (1-e_{s^x})p_{x,y} \mu_{y,z}^s
	+ \sum_{\substack{y\in D_{J,s}^- \\ x<y<z}} (1-e_{s^x})p_{x,y} \mu_{y,z}^s \mod \IZ[\Gamma_{>0}]\Omega_J
\intertext{Note that $(1-e_{s^x})p_{x,y}=0$ for $y\in D_{J,s}^-$ by \ref{lemma:formula_p_mu} applied to $(x,y)$ and $\mu_{y,z}^s = e_{s^y} \mu_{y,z}^s$ for $y\in D_{J,s}^0$ by \ref{lemma:emu_mu}. Using \ref{lemma:pe} we find}
0	&\equiv (1-e_{s^x}) \mu_{x,z}^s + \sum_{\substack{y\in D_{J,s}^0 \\ x<y<z}} \underbrace{(1-e_{s^x})p_{x,y} e_{s^y}}_{=0} \mu_{y,z}^s \\
	&= (1-e_{s^x}) \mu_{x,z}^s \qedhere
\end{align*}
\end{proof}

%
%

\bibliography{canon}
\end{document}